%
%
%
%
%
%
%
%
%
\documentclass[12pt]{amsart}

\usepackage{titletoc}

\usepackage[a4paper,twoside,left=3cm,right=3cm,bottom=3.5cm,top=3.5cm]{geometry}

\usepackage{amsmath,amsfonts,amsthm,amssymb,mathrsfs,enumerate,comment}
\usepackage{cite}
\usepackage[all,2cell,dvips]{xy} \SilentMatrices
\usepackage{epsfig,color}
\usepackage{pdftricks}
\usepackage{setspace}

\setstretch{1.1}

\theoremstyle{plain}
\newtheorem{theorem}{Theorem}[section]

\newtheorem*{lemma*}{Lemma}
\newtheorem{lemma}[theorem]{Lemma}

\newtheorem*{theorem*}{Theorem}

\newtheorem{proposition}[theorem]{Proposition}
\newtheorem*{proposition*}{Proposition}
\newtheorem{cor}[theorem]{Corollary}

\newtheorem*{corollary*}{Corollary}

\theoremstyle{definition}

\newtheorem{remark}[theorem]{Remark}
\newtheorem*{remark*}{Remark}

\newtheorem*{prob}{Problem}

\newtheorem*{definition*}{Definition}
\newtheorem{definition}[theorem]{Definition}

\newtheorem*{example*}{Example}
\newtheorem{example}[theorem]{Example}

\newcommand\sheafExt{\operatorname{\mathcal{E}\mkern-3mu\mathit{xt}}\nolimits}

\numberwithin{equation}{section}

\def\codim{\operatorname{codim}}

\def\c1{\operatorname{c_1}}
\def\c2{\operatorname{c_2}}
\def\Spec{\operatorname{Spec}}
\def\Proj{\operatorname{Proj}}

\def\Sym{\operatorname{Sym}}

\def\CC{{\mathbb C}}
\def\ZZ{{\mathbb Z}}

\def\QQ{{\mathbb Q}}
\def\PP{{\mathbf P}}

\def\L{{  L}}

\def\N{{ N}}
\def\O{{\mathscr O}}
\def\I{{ I}}

\def\E{{\mathscr E}}

\def\H{{\mathscr H}}
\def\F{{\mathscr F}}
\def\G{{\mathscr G}}

\def\M{{\mathscr M}}


\def\+{\oplus}                   
\def\*{\otimes}                  

\def\Pic{\operatorname{Pic}}
\def\Hom{\operatorname{Hom}}
\def\Sym{\operatorname{Sym}}

\def\Bl{\operatorname{Bl}}

\def\cd{\operatorname{cd}}

\renewcommand\setminus{-}

\title{Ample subvarieties and $q$-ample divisors}
\author{John Christian Ottem}
\date{\today}

\address{DPMMS, Cambridge University, Wilberforce Road, Cambridge CB3 0WB, England}
\email{J.C.Ottem@dpmms.cam.ac.uk}
\keywords{Ample subschemes, partially positive line bundles.}

\begin{document}

\thispagestyle{empty}

%
%

\begin{abstract}
We introduce a notion of ampleness for subschemes of any codimension using the theory of $q$-ample line bundles. We also
investigate certain geometric properties satisfied by ample subvarieties, e.g. the Lefschetz hyperplane theorems and numerical positivity. Using these properties, we also construct a counterexample to the converse of the Andreotti-Grauert vanishing theorem.
\end{abstract}
\maketitle
\thispagestyle{empty}

\thispagestyle{empty}

 \section{Introduction}
In addition to being fundamental in many contexts in algebraic geometry, ample divisors have many useful algebro-geometric and topological properties. For example, an ample divisor has an affine complement, which implies that it has non-empty intersection with any other closed subvariety of positive dimension. The goal of this paper is to introduce a notion of ampleness for  subschemes of any codimension and investigate which analogous geometric properties hold for such subschemes. 

Our definition builds on recent work of Demailly, Peternell and Schneider \cite{DPS96} and Totaro \cite{Tot10} by using their notion of a \emph{$q$-ample} line bundle. This is a generalization of the notion of an ample line bundle in the sense that high tensor powers of a line bundle are required to kill cohomology of coherent sheaves in degrees $>q$. If $Y$ is a closed subscheme of a projective variety $X$, we consider the line bundle $\O(E)$ corresponding to the exceptional divisor of the blow-up of $X$ with center $Y$. Even though this line bundle is never ample if $\codim(Y)\ge 2$, we can still expect that ampleness of $Y$ is reflected in some weaker positivity properties of $\O(E)$. In short, we define a codimension $r$ subscheme $Y\subset X$ to be ample \emph{ample} if this line bundle is $(r-1)$-ample.

The first major investigation into ampleness of subvarieties of higher codimension appeared in Hartshorne's book \cite{Har70}. Even though Hartshorne didn't give a definition of ampleness, he surveyed a number of properties such a notion should have, e.g., properties satisfied by complete intersections. In this paper we aim to investigate which of these are satisfied for our definition of ampleness.

The most basic requirement of an ample subscheme is that its
normal bundle should be an ample vector bundle (in the sense of
Hartshorne). For this to make sense we require $Y$ to be a lci subscheme, so that its normal sheaf is a vector bundle. When $Y$ is a Cartier divisor, this condition corresponds to $\O(Y)\big|_Y$ being an ample line bundle on $Y$. Even though this condition is much too weak to be considered ample, it gives some good geometric properties of $Y$ (e.g., non-negative intersection with other varieties). In Section \ref{section:nb}, we show that lci subschemes that are ample in our sense have ample normal bundles.

Another condition concerns the cohomology of the complement $U=X\setminus Y$. When $Y$ is an ample divisor, this complement is affine and so by Serre's affineness criterion $H^i(U,\F)=0\,\,$for all $i>0$ and every coherent sheaf $\F$ on $U$. Generalizing this, we can let $\cd(U)$ denote the smallest positive integer $q$ such that the same condition holds for all $i>q$. In Section \ref{complement}, we show that for an ample subscheme $Y$ in $X$, we have $\cd(X\setminus Y)=\codim Y-1$, in accordance with the case for ample divisors.

The third property is a topological one: ample subvarieties should satisfy some form of the Lefschetz hyperplane theorem. More precisely, if $Y$ is an ample subvariety of $X$, we require that the natural map $H^i(X,\QQ)\to H^i(Y,\QQ)$ should be an isomorphism for $i<\dim Y$ and injective for $i=\dim Y$. In our case, this property follows from the above condition, $\cd(X\setminus Y)=\codim(Y)-1$, and standard topological considerations. Furthermore, in Section \ref{Pn} we will see that these Lefschetz theorems completely characterize ample subvarieties of projective space in the smooth case, so that checking whether a subvariety of projective space is ample amounts to calculating ordinary cohomology groups. 

On the other hand, an interesting feature with our notion of ampleness is that the Lefschetz hyperplane theorem does not necessarily hold with integer coefficients in codimension $\ge 2$. In Section \ref{qpositive} we exploit this observation to construct a counterexample to the converse of the Andreotti-Grauert vanishing theorem, thus giving a negative answer to a problem posed by Demailly, Peternell and Schneider in \cite{DPS96}. In short, this shows that $q$-positivity of a line bundle is a stronger condition than $q$-ampleness for $\frac12{\dim X}-1<q< \dim X-2$.

We also show that ample subvarieties satisfy functorial properties similar to those of ample divisors. For example, ampleness is an open condition in flat families
and is preserved under finite pullbacks. On the other hand, it is worth noting that our definition of ampleness is not preserved
under rational equivalence: A conic in $\PP^3$ is rationally
equivalent to a union of two disjoint lines (which does not satisfy
Lefschetz hyperplane theorem).

Moreover, we show that the ampleness condition has implications for the singularities of $Y$. More precisely, we show that if $Y$ is a reduced ample subscheme and $X$ is smooth, then $Y$ is a locally complete intersection. When $Y$ is not assumed to be reduced, it it is still true that it is equidimensional.

Throughout the paper we work over a field of characteristic 0. The
main reason for this is that it is still unclear what the right notion
of a $q$-ample line bundle should be in positive characteristic. For
example, it is not known whether $q-$ampleness is an open condition in
characteristic $p$ \cite{Tot10}. Moreover, some of the proofs in this
paper require the isomorphism $\Sym^r(\E^*)^*=\Sym^r(\E)$ which is not
valid in this case. Nevertheless, many of the results in this paper
can be modified to characteristic $p$ with minor modifications
(cf. Remark \ref{Gammaample}).

\smallskip

\noindent {\bf Acknowledgements}. I wish to thank my supervisor Burt Totaro for his encouragement and helpful advice. I would also like to thank Claire Voisin and  J\o rgen Vold Rennemo for useful discussions.

\section{Preliminaries on $q$-ample line bundles}\label{prelim}

Let $X$ be an $n$-dimensional projective scheme over a field $k$ of characteristic
0. Following \cite{DPS96} and \cite{Tot10}, we will for a non-negative integer
 $q$, call a line bundle $L$ on $X$ \emph{$q$-ample} if for every coherent sheaf $\F$ there is an integer $m_0>0$ depending on $\F$ such that
\begin{equation}\label{qample:def}
H^i(X,\F\otimes L^{\otimes m})=0
\end{equation}for all $m\ge m_0$ and $i>q$. This is indeed a
generalization of ordinary ampleness in the sense that $0-$ample line
bundles correspond exactly to ordinary ample line bundles by Serre's
theorem. In the following, we will for simplicity not mention the
dependence on $\F$ and write $m\gg 0$ whenever such an $m_0$ exists. 
We will also call a Cartier divisor $D$ on $X$ \emph{$q$-ample} if the corresponding line bundle $\O_X(D)$ is $q$-ample.

\begin{lemma}\label{oneO}
Let $X$ be a projective scheme and fix an ample line bundle $\O(1)$ on $X$. Then $\L$ is $q$-ample if and only if for each $r\ge 0$, $H^i(\L^{\otimes m}\otimes \O(-r))=0$ for $m\gg 0$ and $i>q$. In particular, the condition \eqref{qample:def} needs only be verified for locally free sheaves.
\end{lemma}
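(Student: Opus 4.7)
The forward direction is immediate: the sheaves $\O(-r)$ are coherent, so the vanishing for $\F=\O(-r)$ is a special case of the definition of $q$-ampleness.

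For the nontrivial direction, the plan is a downward induction on the cohomological degree $i$, based on resolving arbitrary coherent sheaves by twists of the fixed ample $\O(1)$. Fix an integer $i_0>q$; I want to show that for every coherent $\F$, $H^i(\F\otimes\L^m)=0$ for $m\gg 0$ and all $i\ge i_0$. The descending induction runs from the top: for $i>n=\dim X$ the vanishing is automatic by Grothendieck vanishing, which gives the base case. So I may assume the statement holds in degrees $\ge i+1$ for every coherent sheaf, and aim to establish it in degree $i$ for a given $\F$.

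By Serre's theorem, for $r\gg 0$ the sheaf $\F(r)$ is globally generated, so there exists a short exact sequence
\[
0\longrightarrow \K \longrightarrow \O(-r)^{\oplus N}\longrightarrow \F\longrightarrow 0
\]
for some $N$ and some $r\ge 0$, with $\K$ coherent. Tensoring with $\L^{\otimes m}$ (which is exact since $\L^{\otimes m}$ is locally free) and taking the long exact sequence in cohomology yields
\[
H^i\bigl(\O(-r)^{\oplus N}\otimes \L^{\otimes m}\bigr)\longrightarrow H^i(\F\otimes \L^{\otimes m})\longrightarrow H^{i+1}(\K\otimes \L^{\otimes m}).
\]
The left-hand group vanishes for $m\gg 0$ by the hypothesis (applied to this particular $r$ and $i>q$), while the right-hand group vanishes for $m\gg 0$ by the inductive hypothesis in degree $i+1$ applied to $\K$. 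Hence the middle group vanishes for $m$ sufficiently large, completing the induction.

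The only point that requires a little care is that the resolution $0\to\K\to\O(-r)^{\oplus N}\to\F\to 0$ uses a value of $r$ that depends on $\F$, and that the value $m_0$ at each step of the induction depends on both $\F$ and $\K$; but since we are proving an existence statement for each fixed $\F$, this causes no issue. The ``in particular'' clause is then just the observation that the same argument works if the vanishing is assumed only for locally free sheaves: the sheaves $\O(-r)^{\oplus N}$ appearing in the resolution are locally free, which is all the induction uses.
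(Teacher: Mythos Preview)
Your proof is correct and follows essentially the same approach as the paper: resolve $\F$ by a direct sum of $\O(-r)$'s using Serre's theorem, then run a downward induction on the cohomological degree starting from Grothendieck vanishing. Your write-up is in fact a bit more explicit than the paper's (which somewhat loosely says ``downward induction on $q$'' where it means on the degree $i$), and your remark on the ``in particular'' clause is accurate.
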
\begin{proof}
Let $\F$ be a coherent sheaf on $X$. Then $\F(k)$ is globally generated for $k$ sufficiently large, so there is a surjective map $\E\to \F$ where $\E$ is a sum of line bundles of the form $\O_X(-a_i)$. Let $\G$ be the kernel of this map and consider the sequence
$$
0\to \G \to \E\to \F \to 0 
$$ If the condition $H^i(\L^{\otimes m}\otimes \O(-r))=0$ holds for all $i>q$ and $m\gg 0$, we get $H^{i+1}(X, \L^{\otimes m}\otimes \G)=H^{i}(X, \L^{\otimes m}\otimes \F)$ for $i>q$ and $L$ is $q$-ample by downward induction on $q$ (starting with the case $q=n$ where the result is clear).
\end{proof}
In the case $X$ is smooth, or more generally, Gorenstein, Serre duality implies that the $q$-ampleness of $L$ is equivalent to the condition that the dual line bundle $L^*$ kills cohomology in low degrees:

\begin{lemma}\label{smalli}
Let $X$ be a Gorenstein projective scheme of dimension $n$. A line bundle $\L$ on $X$ is $q$-ample if and only if for all locally free sheaves $\E$ and $0\le i < n-q$,
\begin{equation}\label{Hsmalli}
H^i(X,\E \otimes L^{*\otimes m}))=0
\end{equation}for $m\gg 0$. In particular, any non-zero effective divisor is $(n-1)$-ample.
\end{lemma}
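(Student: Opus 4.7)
The plan is to reduce condition \eqref{Hsmalli} directly to the defining condition of $q$-ampleness via Serre duality, and then conclude using Lemma \ref{oneO}.

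Since $X$ is Gorenstein and projective, it carries a dualizing line bundle $\omega_X$, and for every locally free sheaf $\F$ there is a natural isomorphism $H^i(X,\F)\cong H^{n-i}(X,\F\v\otimes \omega_X)^*$. Applying this with $\F=\E\otimes L^{*\otimes m}$ I would obtain
$$H^i(X,\E\otimes L^{*\otimes m})\cong H^{n-i}(X,\E\v\otimes \omega_X\otimes L^{\otimes m})^*.$$
Setting $j=n-i$ converts the range $0\le i<n-q$ precisely to $q<j\le n$, and the assignment $\E\mapsto \G:=\E\v\otimes \omega_X$ is a bijection on isomorphism classes of locally free sheaves (with inverse $\G\mapsto \G\v\otimes \omega_X$, using that $\omega_X$ is a line bundle and $\E\v\v\cong \E$). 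Since $H^j(X,-)=0$ automatically for $j>n$, condition \eqref{Hsmalli} for every locally free $\E$ is thus equivalent to $H^j(X,\G\otimes L^{\otimes m})=0$ for every locally free $\G$, every $j>q$, and every $m\gg 0$. By Lemma \ref{oneO}, this last condition is exactly the $q$-ampleness of $L$.

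For the final assertion I would take $q=n-1$, so that the range $0\le i<n-q=1$ collapses to $i=0$. Writing $L=\O_X(D)$ and letting $s_D\in H^0(X,\O_X(D))$ be the canonical section cutting out $D$, multiplication by $s_D^m$ identifies $H^0(X,\E\otimes \O_X(-mD))$ with the subspace of sections of $\E$ that are locally divisible by $s_D^m$. These subspaces form a decreasing chain inside the finite-dimensional space $H^0(X,\E)$, so they stabilize; a section of the stable subspace vanishes to arbitrarily high order along $D$, and Krull's intersection theorem applied at a point where $\E$ is free and $D$ is locally cut out by a non-zerodivisor forces such a section to be zero. Hence $H^0(X,\E(-mD))=0$ for $m\gg 0$, and the main statement gives the $(n-1)$-ampleness of $\O_X(D)$.

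The only delicate point is the Serre-duality bookkeeping: one has to check carefully that $\E\leftrightarrow \G=\E\v\otimes \omega_X$ really is a bijection on locally free sheaves and that the two index ranges line up exactly (which is where the automatic Grothendieck vanishing $H^j=0$ for $j>n$ gets absorbed into the dual range). Once this is set up, the equivalence is formal and the final claim follows immediately from the $q=n-1$ case together with the Krull argument.
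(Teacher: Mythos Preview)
Your argument is correct and follows the same route as the paper: apply Serre duality (using that $\omega_X$ is invertible on a Gorenstein scheme) to interchange the index ranges, and then invoke Lemma~\ref{oneO} to pass between vanishing for all locally free sheaves and $q$-ampleness. For the final assertion the paper simply asserts that $\E(-mD)$ has no global sections for $m\gg 0$, whereas you supply the Krull intersection argument; note only that your local conclusion $s_x=0$ for $x\in D$ gives $s=0$ globally once one knows $D$ meets every irreducible component (e.g.\ $X$ integral), a hypothesis the paper also tacitly uses.
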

\begin{proof}
Let $\E$ be a locally free sheaf on $X$ and suppose $L$ is $q-$ample. By Serre duality, $H^i(X,\E\otimes L^{* \otimes
  m})$ is dual to the cohomology group $H^{n-i}(X,\E^* \otimes \omega_X \otimes L^{ \otimes m}),$ which vanishes for $n-i>q$ and sufficiently large
$m$. Conversely, if the condition \eqref{Hsmalli} is satisfied, we have for any locally free sheaf $\E$,
$H^i(X,\E \otimes L^{\otimes m})=H^{n-i}(X,\E^*\otimes \omega_X\otimes L^{*\otimes m}))^*$. Since $X$ is Gorenstein, $\omega_X$ is locally free and so the last cohomology group vanishes for $i<n-q$ and $m$ large. Hence $L$ is $q$-ample by the previous lemma.

To see why the last statement is true, note that if $D$ is effective, then $\E(-mD)$ cannot have any global sections for $m$ large.
\end{proof}


The following result summarizes some properties of $q$-ample line bundles under base change. Even though the proof is similar to that of the case for $q=0$, we include a proof for convenience of the reader.

\begin{proposition}\label{basechange}
Let $X$ be a projective scheme and let $L$ be a line bundle on $X$. Then:

\begin{enumerate}[i)]
\item $L$ is $q$-ample on $X$ iff $L_{red}$ is $q$-ample on $X_{red}$.
\item $L$ is $q$-ample on $X$ iff $L\big|_{X_i}$ is $q$-ample on each component $X_i$ of $X$.
\item Let $f:X\to Y$ be a finite morphism. Then $L$ $q$-ample implies that $f^*L$ is $q$-ample. If $f$ is surjective, then the converse also holds.
\end{enumerate}
\end{proposition}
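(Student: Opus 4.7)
For (i), I filter any coherent $\mathcal{F}$ on $X$ by powers of the nilradical $\mathcal{I}\subset \mathcal{O}_X$. Since $\mathcal{I}^N = 0$ for some $N$, the successive quotients $\mathcal{I}^k\mathcal{F}/\mathcal{I}^{k+1}\mathcal{F}$ are coherent sheaves on $X_{\mathrm{red}}$, and iterated long exact sequences show that vanishing of $H^i(X, \mathcal{F}\otimes L^m)$ for $i>q$, $m\gg 0$ is equivalent to the analogous vanishing on $X_{\mathrm{red}}$. For (ii), reduce to $X$ reduced by (i) and induct on the number of irreducible components. Writing $X = X_1\cup X'$ with $X' = X_2\cup\cdots \cup X_n$, the short exact sequence
\[0\to \mathcal{I}_1\mathcal{F}\to \mathcal{F}\to \mathcal{F}/\mathcal{I}_1\mathcal{F}\to 0\]
has $\mathcal{F}/\mathcal{I}_1\mathcal{F} = \mathcal{F}|_{X_1}$, while $\mathcal{I}_1\mathcal{F}$ is annihilated by $\mathcal{I}_{X'}$ (since $\mathcal{I}_1\cdot\mathcal{I}_{X'}\subset \mathcal{I}_1\cap\mathcal{I}_{X'} = \mathcal{I}_{X_1\cup X'} = 0$ on the reduced scheme) and is therefore supported on $X'$. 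The assumption of $q$-ampleness on each $X_i$ together with the inductive hypothesis gives cohomological vanishing of both outer terms for $i > q$, $m\gg 0$, forcing $H^i(X, \mathcal{F}\otimes L^m) = 0$.

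For (iii), the forward direction is immediate: since $f$ is affine, the projection formula gives $H^i(X, \mathcal{F}\otimes f^*L^m) = H^i(Y, f_*\mathcal{F}\otimes L^m)$, and coherence of $f_*\mathcal{F}$ together with $q$-ampleness of $L$ yields the vanishing. For the converse, reduce by (i), (ii) to $X, Y$ reduced with $Y$ integral, and further to $X$ integral by selecting an irreducible component of $X$ that maps surjectively onto $Y$. If $Y$ is \emph{normal}, the trace map $\mathrm{Tr}\colon f_*\mathcal{O}_X\to \mathcal{O}_Y$ is globally defined (the trace of an integral element is integral, hence lies in $\mathcal{O}_Y$ by normality), and its composition with the inclusion $\mathcal{O}_Y\hookrightarrow f_*\mathcal{O}_X$ is multiplication by $\deg(f)$, invertible in characteristic zero. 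Hence $\mathcal{O}_Y$ is a direct $\mathcal{O}_Y$-summand of $f_*\mathcal{O}_X$; restricting to locally free $\mathcal{F}$ by Lemma~\ref{oneO}, this summand structure exhibits $H^i(Y, \mathcal{F}\otimes L^m)$ as a direct summand of $H^i(X, f^*\mathcal{F}\otimes f^*L^m) = 0$ for $i>q$, $m\gg 0$.

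For general (non-normal) integral $Y$, I pass to the normalization $\nu\colon \tilde Y\to Y$; applying the forward direction of (iii) to the finite surjection $\pi_1\colon X\times_Y\tilde Y\to X$ and invoking the normal case on $\tilde Y$ shows $\nu^*L$ is $q$-ample on $\tilde Y$. The conductor square then yields
\[0\to \mathcal{O}_Y\to \nu_*\mathcal{O}_{\tilde Y}\oplus \mathcal{O}_Z\to \nu_*\mathcal{O}_{\tilde Z}\to 0,\]
with $Z\subsetneq Y$ the conductor subscheme and $\tilde Z = \nu^{-1}(Z)$, and by induction on $\dim Y$ one has $L|_Z$ $q$-ample. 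The resulting long exact sequence immediately gives $H^i(Y, \mathcal{F}\otimes L^m) = 0$ for $i\geq q+2$. \textbf{The main obstacle} is the boundary case $i = q+1$: there $H^{q+1}(Y, \mathcal{F}\otimes L^m)$ appears as the cokernel of the map $H^q(\tilde Y, \nu^*\mathcal{F}\otimes \nu^*L^m)\oplus H^q(Z, \cdot) \to H^q(\tilde Z, \cdot)$, so one must exhibit surjectivity of the first factor $H^q(\tilde Y, \nu^*\mathcal{F}\otimes \nu^*L^m)\to H^q(\tilde Z, \cdot|_{\tilde Z})$. This in turn follows from $H^{q+1}(\tilde Y, \mathcal{I}_{\tilde Z}\otimes \nu^*\mathcal{F}\otimes \nu^*L^m) = 0$, which is $q$-ampleness of $\nu^*L$ on $\tilde Y$ applied to the coherent sheaf $\mathcal{I}_{\tilde Z}\otimes \nu^*\mathcal{F}$.
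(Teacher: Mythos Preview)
Your argument is correct. Parts (i), (ii), and the forward direction of (iii) match the paper's proof essentially verbatim (the paper cites \cite[Corollary 8.2]{Tot10} for (i) rather than writing out the nilradical filtration, but the content is the same).

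For the converse of (iii), however, you take a genuinely different route. The paper follows Hartshorne's argument from \cite[Proposition 4.4]{Har70}: after reducing to $X,Y$ integral, one picks a $K(Y)$-basis $s_1,\dots,s_d$ of $K(X)$, which yields a \emph{generic isomorphism} $u\colon f_*\G\to \F^{\oplus d}$ for a suitable coherent $\G$ on $X$; the kernel and cokernel of $u$ are supported on a proper closed subset of $Y$, and noetherian induction finishes the proof. Your approach instead splits off the normal case via the trace map $\frac{1}{\deg f}\mathrm{Tr}\colon f_*\O_X\to \O_Y$ (exploiting characteristic zero directly), and then descends from the normalization $\tilde Y$ to $Y$ using the conductor square and induction on $\dim Y$. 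The delicate point in your version is the boundary degree $i=q+1$, which you handle cleanly by invoking $q$-ampleness of $\nu^*L$ once more on the coherent sheaf $\I_{\tilde Z}\otimes\nu^*\F$. Hartshorne's argument has the advantage of being characteristic-free and avoiding this boundary analysis; your trace-map argument is more transparent in the normal case and makes the role of characteristic zero explicit. One small point worth spelling out: when you apply the normal case to $X\times_Y\tilde Y\to\tilde Y$, the fiber product need not be integral, so you should again pass to a reduced irreducible component dominating $\tilde Y$ (just as you did earlier for $X\to Y$); this is harmless by (i) and (ii).
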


\begin{proof}

$i)$ is \cite[Corollary 8.2]{Tot10}. Suppose next that $f$ is a finite morphism. Using the projection formula and the Leray spectral sequence applied to $f$, we find
\begin{equation}\label{leray}
H^i(X,(f^*L)^{\otimes m} \otimes \G)=H^i(Y,L^{\otimes m} \otimes f_*\G),
\end{equation}so if $L$ is $q$-ample, $f^*L$ is also $q$-ample. In particular, the restriction of a $q$-ample line bundle to a subscheme is again $q$-ample. This shows one direction in $ii)$ and the first part of $iii)$.

To prove the remaining direction in $ii)$, let  $X=X_1\cup \cdots \cup X_r$ be the decomposition of $X$ into its connected components and assume $L$ restricted to each $X_i$ is $q$-ample.
Let $I$ be the ideal sheaf of $X_r$ and consider the sequence

$$0 \to I\F \to \F \to \F/I\F \to 0$$

Now, $I\F$ is supported on $X_1\cup \cdots \cup X_{r-1}$ and $\F/I\F$ is supported on $X_n$. Hence by induction on $r$, we have $H^i(X,I\F \otimes L^{\otimes m}) = 0$ and $H^i(X,\F/I\F \otimes L^{\otimes m}) = 0$ for $i>q$ for $m$ large. By the long exact sequence, it follows that L is $q$-ample. 

It remains to prove the converse of $iii)$. So assume $f$ is finite, surjective and assume $f^*L$ is $q$-ample. We must show that for any coherent sheaf $\F$ on $Y$ we have $H^i(Y,L^{\otimes m} \otimes \F)=0$ for $i>q$ and large $m\gg 0$. We proceed by noetherian induction on $Y$, following the proof of \cite[Proposition 4.4]{Har70}.

By $i)$ and $ii)$, we may reduce to the case where $X$ and $Y$ are projective varieties. In that case, let $d$ be the degree of $f$. If $U=\Spec(A)\subset X$ is an open affine subset, we can choose elements $s_1,\ldots,s_d\in A$ such that $\{s_i\}_{i=1}^d$ is a $K(Y)$-basis of $K(X)$. If $\M$ is the subsheaf of $K(X)$ generated by the $s_i$'s, then by construction the map $\O_Y^d\to f_*\M$ defined by $e_i\mapsto s_i$ is a generic isomorphism. Moreover, applying $\Hom(-,\F)$, we see that this map induces a generic isomorphism $u:f_*\G\to \F^{\oplus m}$ for some coherent sheaf $\G$ on $X$.  Let $K$ and $C$ be the kernel and cokernel of $u$ respectively. By construction, $K$ and $C$ are coherent sheaves supported on a closed proper subset of $Y$. Therefore, by induction, we have $H^i(Y,K\otimes L^{\otimes m})=H^i(Y,C\otimes L^{\otimes m})$ for $i>q$ and $m\gg 0$. Now taking the cohomology of the sequences
$$
0 \to K \to f_*\G \to \mbox{Im } u \to 0
$$
$$
0 \to \mbox{Im } u \to \F^{\oplus d} \to C \to 0
$$and using \eqref{leray}, we see that also $H^i(Y, L^{\otimes m}\otimes \F)=0$ for $i>q$ and $m\gg 0$. 
\end{proof}


\begin{lemma}\label{-Lnotqample}
Let $X$ be a projective scheme of dimension $n$ and $\O(1)$ an ample line bundle on $X$. Then $\O(-1)$ is not $q$-ample for any $q<n$.
\end{lemma}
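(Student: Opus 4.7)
The plan is to reduce the statement, via Proposition \ref{basechange}, to the case $X=\PP^n$, where the cohomology computation is classical.

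First I would apply Proposition \ref{basechange}(i) to assume $X$ is reduced. Choosing an irreducible component $X_1\subseteq X$ of maximal dimension $n$, the closed immersion $X_1\hookrightarrow X$ is a finite morphism, so the forward direction of Proposition \ref{basechange}(iii) shows that if $\O(-1)$ is $q$-ample on $X$ then $\O(-1)|_{X_1}$ is $q$-ample on $X_1$. Since $\O(1)|_{X_1}$ is still ample, this reduces the problem to the case where $X$ is a projective variety of dimension $n$. Next I would embed $X$ in some $\PP^N$ via a very ample power $\O(a)$ and project linearly from a general $(N-n-1)$-plane $\Lambda\subseteq\PP^N$ disjoint from $X$. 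The resulting morphism $g\colon X\to\PP^n$ is finite surjective and satisfies $g^*\O_{\PP^n}(1)=\O_X(a)$.

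Assuming for contradiction that $\O_X(-1)$ is $q$-ample for some $q<n$, tensor powers preserve $q$-ampleness, so $\O_X(-a)=\O_X(-1)^{\otimes a}=g^*\O_{\PP^n}(-1)$ is $q$-ample as well. The converse direction of Proposition \ref{basechange}(iii) then forces $\O_{\PP^n}(-1)$ itself to be $q$-ample on $\PP^n$, which contradicts the classical computation $\dim H^n(\PP^n,\O(-m))=\binom{m-1}{n}\neq 0$ for $m\ge n+1$.

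The main thing to watch is the sequence of reductions: one has to invoke the forward direction of Proposition \ref{basechange}(iii) for the closed immersion of a top-dimensional component and the converse direction for the finite surjection to $\PP^n$, and to verify that the generic linear projection restricts to a finite morphism on $X$---this is immediate from $X\cap\Lambda=\emptyset$ together with the fact that a projective variety of positive dimension cannot lie in an affine space.
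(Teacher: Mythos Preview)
Your argument is correct and takes a genuinely different route from the paper. The paper argues directly on $X$: it invokes the dualizing sheaf $\omega_X$ (which exists for any projective scheme) and the duality isomorphism $\Hom_X(\O_X(-m),\omega_X)\cong H^n(X,\O_X(-m))^*$; since $\omega_X\neq 0$, one has $H^0(X,\omega_X(m))\neq 0$ for $m\gg 0$, hence $H^n(X,\O(-m))\neq 0$ and $\O(-1)$ fails to be $(n-1)$-ample. Your approach instead reduces everything to the single classical computation on $\PP^n$ by chaining the functoriality statements in Proposition~\ref{basechange}: pass to the reduction, then to a top-dimensional integral component, then Noether-normalize via a linear projection to push the question down to projective space. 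The paper's proof is shorter and self-contained but requires comfort with the dualizing sheaf on a possibly non-Cohen--Macaulay scheme; your proof is more elementary at the cost of invoking the finite-pullback/pushforward machinery and the geometric input of Noether normalization. One small remark: the claim that positive tensor powers of a $q$-ample line bundle remain $q$-ample is immediate from the definition, but it is worth saying so explicitly since the paper does not isolate this as a lemma.
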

\begin{proof}The argument follows that of \cite[Theorem 9.1]{Tot10}. If suffices to show that $H^n(X,\O(-m))\neq 0$ for all $m$ sufficiently large. If $\omega_X$ denotes what Hartshorne calls the dualizing sheaf of $X$ \cite{Har77}, we have a canonical isomorphism
$$
\Hom_X(\O_X(-m),\omega_X) \cong H^n(X,\O_X(-m))^*
$$(see \cite[III.7]{Har77}). The coherent sheaf $\omega_X$ is non-zero on $X$, so in particular for all $m$ large, $\Hom(\O_X(-m),\omega_X)=H^0(X,\omega_X(m))\neq 0$.\end{proof}

\section{Ample subschemes}\label{section:defn}

In this section we define the notion of an ample subscheme. We let $X$ be a projective scheme of dimension $n$ over $k$. As mentioned in the introduction, if $Y$ is a closed subscheme of $X$, we consider the exceptional divisor $E$ on the blow-up $X'=\Bl_Y X$ of $X$ with center $Y$. This makes sense because as noted by Hartshorne \cite{Har70}, many positivity properties of $Y$ are reflected in the complement $X\setminus Y\cong X'\setminus E$. Retaining the notations of the previous section, we make the following definition.
\begin{definition}
Let $Y\subset X$ be a closed subscheme of codimension $r$ and let $\pi:X'\to X$ be the blow-up of $X$ with center $Y$. We call $Y$ \emph{ample} in $X$ if the exceptional divisor $E$ is a $(r-1)-$ample divisor on $X'$.
\end{definition}Observe that if $Y$ is a Cartier divisor, then $X'$ is
canonically isomorphic to $X$, so in this case the definition above coincides with the standard notion of ampleness. 

Recall that the blow-up of $Y$ can be identified with the $\Proj$ of the Rees algebra $R(I_Y)=\bigoplus_{m\ge 0}I_Y^k$. In particular, the inclusion $R(I_Y^m)\subset R(I_Y)$ induces an isomorphism of blow-ups $j:\Bl_{I_Y}(X)\to Bl_{I_Y^m}(X)$ such that $j^*\O(E)=\O(mE)$. Hence a subscheme $Y$ is ample in $X$ if and only if $mY$ is, where $mY$ is the scheme defined by the ideal sheaf $I_Y^m$.

\begin{example}
If $X$ is a Gorenstein projective variety, then finite non-empty subsets of $X$ are ample: Indeed, the exceptional divisor $E$ on the blow-up at these points is effective and so it is $(n-1)$-ample by Lemma \ref{smalli}.
\end{example}

\begin{example}Linear subspaces $\PP^k\subset \PP^n$ are
ample, as can be seen by direct calculation using the usual description of the blow-up as a $\PP^{n-k-1}$-bundle. We skip the details of this computation because the result will be an easy consequence of Proposition \ref{c.i}.
\end{example}

Note that $\O_X(E)$ restricts to the line bundle $\O(-1)$ which is negative on the fibers of $\pi$. The next result makes use of this observation to show that ample subschemes are equidimensional. The proof also shows that the number $q=r-1$ is in some sense the best possible for the $q$-ampleness of $E$. 

\begin{proposition}\label{equidim}
Let $Y\subset X$ be an ample subscheme of codimension $r=\dim X-\dim Y$ and let $y\in Y$ be a closed point. Then the fiber of the blow-up $\pi^{-1}(y)$ in $E$ is $(r-1)$-dimensional. In particular, $Y$ is equidimensional.
\end{proposition}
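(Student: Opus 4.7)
The plan is to pinch $\dim \pi^{-1}(y)$ between $r-1$ from above and $r-1$ from below. The upper bound will come from Lemma \ref{-Lnotqample} applied on the fiber $F = \pi^{-1}(y)$, while the lower bound will follow from the fact that $E$ is an effective Cartier divisor on $X'$, via the classical fiber-dimension inequality. Equidimensionality will then drop out of the lower-bound estimate applied at a general point of each component.

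For the upper bound, I fix a closed point $y \in Y$ and use that $X' = \Proj_X \bigoplus_{k\ge 0} I_Y^k$ carries the tautological relatively ample line bundle $\O_{X'}(1) = \O_{X'}(-E)$ for $\pi$, whose restriction to the projective scheme $F$ is a genuinely ample line bundle $\O_F(1) = \O_F(-E)$. Since $F \hookrightarrow X'$ is a closed immersion and hence finite, Proposition \ref{basechange}$(iii)$ shows that $\O_F(E) = \O_F(-1)$ is $(r-1)$-ample, and Lemma \ref{-Lnotqample} then forces $\dim F \le r-1$. For the lower bound, $E$ has pure codimension one in the $n$-dimensional $X'$, so every irreducible component $E_i$ of $E$ satisfies $\dim E_i = n-1$; surjectivity of $\pi|_E : E \to Y$ places $y$ in some $\pi(E_i)$, and the classical fiber-dimension inequality for the dominant morphism $E_i \to \pi(E_i)$ of irreducible varieties gives
\[
\dim (\pi|_{E_i})^{-1}(y) \;\ge\; \dim E_i - \dim \pi(E_i) \;\ge\; (n-1) - (n-r) \;=\; r-1,
\]
where I use $\pi(E_i) \subset Y$ and $\dim Y = n-r$. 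Combining the two estimates yields $\dim F = r-1$.

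For the equidimensionality statement, I apply the same argument component-by-component: given an irreducible component $Y_j \subset Y$, surjectivity of $\pi|_E$ produces a component $E_i$ of $E$ with $\pi(E_i) = Y_j$, and taking $y \in Y_j$ general the fiber-dimension inequality then reads $r-1 = \dim F \ge (n-1) - \dim Y_j$, forcing $\dim Y_j \ge n-r$; combined with $\dim Y_j \le \dim Y = n-r$ this gives equality. The one subtle point worth verifying is that Proposition \ref{basechange}$(iii)$ legitimately passes $q$-ampleness to the possibly non-reduced fiber $F$, but this is automatic since every closed immersion is finite. Beyond this bookkeeping I don't foresee any serious obstacle.
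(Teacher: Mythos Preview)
Your proof is correct and follows essentially the same route as the paper's: both bound $\dim \pi^{-1}(y)$ from above by restricting the $(r-1)$-ample divisor $E$ and the $\pi$-ample divisor $-E$ to the fiber and invoking Lemma \ref{-Lnotqample}, bound it from below via the fiber-dimension inequality applied to a component of $E$ mapping onto a component of $Y$, and deduce equidimensionality from the resulting squeeze. Your write-up is slightly more explicit in citing Proposition \ref{basechange}$(iii)$ for the restriction step and in justifying why some component $E_i$ surjects onto each $Y_j$, but there is no substantive difference in strategy.
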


\begin{proof}
Let $Y_0$ be an irreducible component  of $Y$ containing $y$ and let $Z=\pi^{-1}(y)$ be the fiber and let $E_0$ be a component of $E$ dominating $Y_0$. Since $E_0$ is $(n-1)$-dimensional, it follows that $Z$ is at least $(r-1)$-dimensional. 

On the other hand, since $-E$ is $\pi$-ample, the restriction $-E |_Z$ is an ample divisor on the fiber $Z$. Also, since $Y$ is ample, $E |_Z$ is $(r-1)$-ample on the $Z$. By Lemma \ref{-Lnotqample} this implies $\dim Z\le r-1$ and hence $\dim Z=r-1$. 

Now, if $\dim Y_0 < \dim Y$ then the fiber over a closed point in $Y_0$ has dimension $>r$, contradicting the above.
\end{proof}

On the other hand, we will see that ample subschemes can have embedded components (see Section \ref{alg}).

\begin{cor}
Let $X$ be a normal projective variety and let $D$ be a subscheme of codimension one which is ample in $X$. Then $D$ is a Cartier divisor.
\end{cor}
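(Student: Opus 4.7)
The plan is to show that the blow-up morphism $\pi:X'=\Bl_D X\to X$ is an isomorphism. Once this is established, the result is immediate: the exceptional divisor $E$ is Cartier on $X'$, so $\pi^{-1}I_D\cdot \O_{X'}=\O_{X'}(-E)$ is invertible; transporting through the isomorphism $\pi$ shows that $I_D$ is itself invertible on $X$, i.e.\ $D$ is a Cartier divisor.

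The key input is the preceding Proposition~\ref{equidim}, applied with $r=\codim_X D=1$. It tells us that for every closed point $y\in D$ the fiber $\pi^{-1}(y)\subset E$ has dimension $r-1=0$. Over the complement $X\setminus D$ the map $\pi$ is already an isomorphism, so every fiber of $\pi$ is zero-dimensional. A proper morphism with finite fibers is finite, so $\pi$ is finite. Moreover, $\pi$ is birational because $D$ is a proper closed subscheme of the integral variety $X$, so $\pi$ is an isomorphism over the dense open set $X\setminus D$.

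To conclude I would invoke the normal form of Zariski's main theorem: a finite birational morphism onto a normal variety is an isomorphism. Equivalently, $\pi_*\O_{X'}$ is a coherent $\O_X$-algebra that is integral over $\O_X$ and sits inside the function field $K(X)$; normality of $X$ forces $\pi_*\O_{X'}=\O_X$, whence $\pi$ is an isomorphism. This completes the argument.

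I do not expect any substantial obstacle: once the codimension-one case of Proposition~\ref{equidim} is in hand, everything reduces to the classical chain ``fibers finite $\Rightarrow$ finite $\Rightarrow$ isomorphism'' for birational maps to a normal target. The only thing one should be slightly careful about is that the conclusion of Proposition~\ref{equidim} concerns the fibers \emph{inside $E$}, but this is harmless since $\pi^{-1}(y)\subseteq E$ set-theoretically for $y\in D$.
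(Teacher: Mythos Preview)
Your proof is correct and follows exactly the paper's approach: use Proposition~\ref{equidim} to get finite fibers, conclude $\pi$ is finite and birational, then invoke normality of $X$ to deduce $\pi$ is an isomorphism, whence $D$ is Cartier. You have simply spelled out in detail what the paper compresses into two lines.
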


\begin{proof}
The blow-up $\pi:X'\to X$ along $D$ is a birational morphism which is finite by Proposition \ref{equidim}. Since $X$ is normal, $\pi$ is an isomorphism and $D$ is Cartier by the definition of blowing up.\end{proof}

%
%

Recall that a closed subscheme $Y\subset X$ is locally complete
intersection (lci) if its ideal sheaf $\I_Y$ can be locally generated by a regular sequence. When $X$ is smooth this is equivalent to saying that $\I_Y$ can locally be generated by exactly $r$ elements, where $r$ is the codimension of $Y$ in $X$ \cite[p. 105]{Matsumura}. This is of course satisfied if $Y$ is non-singular. By \cite[p. 105]{Har70} the lci condition is equivalent to the two conditions i) $I_Y/I_Y^2$ is a locally free $\O_Y$-module, and
ii) for each $m\ge 0$ the canonical homomorphism $\Sym^m( I_Y/I_Y^2)\to
I_Y^m/I_Y^{m+1}$ is an isomorphism. In particular, the normal sheaf
$N_{Y|X}=(I_Y/I_Y^2)^*$ can be regarded as a vector bundle on $Y$. 

In addition, when $Y$ is locally complete intersection of a smooth projective variety $X$, 
one can check that the blow-up $X'$ is also locally complete intersection. In particular, $X'$ is Gorenstein so the theory from Section \ref{prelim} can be applied.

Even though most subschemes $Y\subset X$ in this paper will be taken to be locally complete intersection, the definition of an ample subscheme makes sense in any projective scheme $X$ and subscheme $Y$. We note that there are ample subschemes that are not locally complete intersection; fat point subschemes in $\mathbb{P}^2$ provide easy examples. However, the next result shows if the ambient scheme is smooth, then all \emph{reduced} ample subschemes are in fact lci.

\begin{proposition}\label{ample=lci}
Let $Y$ be a reduced ample subscheme of a smooth scheme $X$. Then $Y$ is a locally complete intersection.
\end{proposition}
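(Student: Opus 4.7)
The question is local, so fix a closed point $y\in Y$, let $R=\mathscr{O}_{X,y}$ be the regular local ring of dimension $n$, and let $I=I_{Y,y}$ be the defining ideal (of height $r=\operatorname{codim}(Y,X)$). Write $\mu:=\dim_{k(y)} I/\mathfrak{m}_y I$ for the minimal number of generators of $I$. By Krull's height theorem we always have $\mu\ge r$, so the content of the proposition is the reverse inequality $\mu\le r$, which would give $I$ locally generated by exactly $r$ elements and hence $Y$ lci at $y$.

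A minimal generating set $f_1,\dots,f_\mu$ of $I$ locally realizes the blow-up as a closed subscheme of $\operatorname{Spec}(R)\times\mathbb{P}^{\mu-1}$, and exhibits the fiber $F:=\pi^{-1}(y)$ as a closed subscheme of $\mathbb{P}^{\mu-1}_{k(y)}\cong\mathbb{P}(I/\mathfrak{m}_y I)$; minimality of the generating set makes this embedding non-degenerate (i.e.\ $F$ is not contained in any hyperplane). Proposition~\ref{equidim} gives $\dim F=r-1$, so if $\mu>r$ then $F$ is a proper non-degenerate closed subscheme of $\mathbb{P}^{\mu-1}$. The plan is to use reducedness of $Y$ to rule this out. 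Since $Y$ is reduced and the characteristic is zero, generic smoothness provides a dense open $U\subset Y$ over which $Y$ is smooth and hence lci in $X$, and the exceptional divisor restricts there to the reduced projective bundle $\mathbb{P}(N_{U/X}^\vee)$ with fibers $\mathbb{P}^{r-1}$. Combining this with $(r-1)$-ampleness of $E$, the special fiber $F$ should be forced to be a reduced, linearly embedded $\mathbb{P}^{r-1}\subset\mathbb{P}^{\mu-1}$: heuristically, any extra component of $F$ beyond the degeneration of the generic $\mathbb{P}^{r-1}$'s over $U$ would either violate the fiber-dimension bound of Proposition~\ref{equidim} at a nearby point of $Y$, or would produce a coherent sheaf on $X'$ whose cohomology in degree $\ge r$ persists for arbitrarily large twists by $E$, contradicting $(r-1)$-ampleness. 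Non-degeneracy of the embedding then forces $\mu=r$, as required.

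The main obstacle is precisely this last step: combining $(r-1)$-ampleness of $E$ with reducedness of $Y$ to rule out proper non-degenerate fibers $F\subset\mathbb{P}^{\mu-1}$. Reducedness is essential, since the paper already notes that fat-point subschemes in $\mathbb{P}^2$ are ample but not lci, and in those cases $F$ genuinely is a proper non-degenerate subscheme of $\mathbb{P}^{\mu-1}$. The heart of the proof therefore has to be an argument using ampleness and reducedness together; neither alone suffices, and the challenge is to isolate the cohomological obstruction on $X'$ that the reduced case forces.
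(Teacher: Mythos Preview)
Your setup is correct and you have actually assembled exactly the right hypotheses, but the argument you call the ``main obstacle'' is genuinely incomplete: the heuristic about degenerating the generic $\mathbb{P}^{r-1}$ fibers and invoking $(r-1)$-ampleness to exclude extra components is not a proof, and you explicitly acknowledge this. There is no way to fill this in by a soft degeneration or semicontinuity argument alone; one really needs a commutative-algebra input that uses the radical hypothesis in an essential way.

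The missing ingredient is a theorem of Cowsik and Nori: if $R$ is a regular local ring and $I\subset R$ is a radical, equidimensional ideal such that the special fiber ring $G(I)=\bigoplus_{k\ge 0} I^k/\mathfrak{m}I^k$ has Krull dimension equal to $\operatorname{ht}(I)$, then $I$ is a complete intersection. In your notation, $F=\pi^{-1}(y)=\Proj G(I)$, so $\dim F=r-1$ from Proposition~\ref{equidim} is exactly the condition $\dim G(I)=r=\operatorname{ht}(I)$; reducedness of $Y$ gives $I$ radical, smoothness of $X$ gives $R$ regular, and equidimensionality of $Y$ again comes from Proposition~\ref{equidim}. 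So the paper's proof is simply: verify the hypotheses of Cowsik--Nori using Proposition~\ref{equidim} and conclude. Your framework is compatible with this---you had reduced the problem precisely to the algebraic statement that Cowsik--Nori proves---but you were attempting to rederive that theorem by ad hoc geometric means rather than citing it.
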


\begin{proof}
A result of Cowsik and Nori \cite{Cowsik} says that a radical, equidimensional ideal $I$ in a regular local ring $(R,\mathfrak{m})$ is a complete intersection provided the Krull dimension of the special fiber algebra $G(I)=\bigoplus_{k\ge0} I^k / \mathfrak{m} I^k$ equals $\mbox{ht}(I)$. In our case, this latter condition follows immediately from Proposition \ref{equidim}.\end{proof}

\section{Ampleness of the normal bundle}\label{section:nb}
Recall that a vector bundle $\E$ is said to be ample if the line bundle $\O(1)$ is ample on $\PP(\E)$, where $\PP(\E)=\Proj(\Sym^* \E)$ is the variety of codimension-1 subspaces of $\E$. Equivalently, in terms of cohomology, $\E$ is ample if for any coherent sheaf $\F$ on $X$, $H^i(X,\Sym^m \E \otimes \F)=0$ for $i>0$ and $m$ sufficiently large (see \cite[III.1]{Har70}). 

In the case $Y$ is locally complete intersection subscheme and $Y$ is ample in $X$, we now show that the normal bundle of $Y$ is an ample vector bundle.  This is a natural requirement because it guarantees that $Y$ intersects every other subvariety non-negatively \cite[Corollary 8.4.3]{Laz04}.

\begin{proposition}\label{cohomE}
Let $\E$ be a vector bundle of rank $r$ on $X$ and let $\pi:\PP(\E^*)\to X$ be the projectivization of $\E^*$. Then for $m\ge 0$,
\begin{equation}\label{cohomE:eq}
H^i(X,\Sym^{m}\E \otimes \F)\cong H^{r+i-1}(\PP(\E^*),\O(-m-r)\otimes \pi^*\left(\F \otimes  \det \E^*\right))
\end{equation}
In particular, $\E$ is an ample vector bundle if and only if the line bundle $\O(-1)$ is $(r-1)$-ample on $\PP(\E^*)$.
\end{proposition}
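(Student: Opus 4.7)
The plan is to apply relative Serre duality on the smooth projective bundle $\pi: \PP(\E^*) \to X$, of relative dimension $d = r - 1$. First I would pin down the relative dualizing sheaf: from the relative Euler sequence
$$0 \to \Omega_{\PP(\E^*)/X}(1) \to \pi^* \E^* \to \O(1) \to 0,$$
taking determinants yields $\omega_\pi = \O(-r) \otimes \pi^* \det \E^*$.

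Next, for $m \ge 0$, fiberwise Serre vanishing gives $H^j(\PP^{r-1}, \O(-m-r)) = 0$ for $j \ne r-1$, so by cohomology and base change $R^j \pi_* \O(-m-r) = 0$ for $j \ne r-1$. For the surviving $j = r-1$, relative Serre duality combined with the projection formula gives
\begin{align*}
R^{r-1} \pi_* \O(-m-r) &\cong \bigl(\pi_*(\O(m+r) \otimes \omega_\pi)\bigr)^\vee \\
&= \bigl(\pi_*\O(m) \otimes \det \E^*\bigr)^\vee \\
&= \Sym^m \E \otimes \det \E,
\end{align*}
where I use $\pi_* \O(m) = \Sym^m \E^*$ on $\PP(\E^*)$ together with the characteristic-zero identification $(\Sym^m \E^*)^\vee = \Sym^m \E$. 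A further application of the projection formula (first for locally free $\F$; the general case reduces to this by the resolution argument of Lemma \ref{oneO}) gives
$$R^{r-1} \pi_*\bigl(\O(-m-r) \otimes \pi^*(\F \otimes \det \E^*)\bigr) = \Sym^m \E \otimes \F,$$
and the Leray spectral sequence collapses into the isomorphism \eqref{cohomE:eq}.

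For the "in particular", by definition $\E$ is ample iff $H^i(X, \Sym^m \E \otimes \F) = 0$ for every coherent $\F$, $i > 0$ and $m \gg 0$, which via the established isomorphism is equivalent to the vanishing of $H^j(\PP(\E^*), \O(-m-r) \otimes \pi^* \G) = 0$ for $j > r-1$, all coherent $\G$ on $X$, and $m \gg 0$. To upgrade this to all coherent sheaves on $\PP(\E^*)$---that is, to genuine $(r-1)$-ampleness of $\O(-1)$---I would invoke Lemma \ref{oneO} on $\PP(\E^*)$ with an ample line bundle of the form $H = \O(1) \otimes \pi^* \O_X(\ell)$ for $\ell \gg 0$; the test sheaves $H^{-s}$ are precisely pullbacks from $X$ twisted by a power of $\O(-1)$, and these fall in the class already controlled. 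The main obstacle is keeping the $\det$-twists straight in the Serre-duality calculation and securing the identification $(\Sym^m \E^*)^\vee = \Sym^m \E$ (which is where characteristic zero enters); once these are in place, the rest is routine spectral-sequence bookkeeping.
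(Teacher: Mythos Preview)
Your proposal is correct and follows essentially the same route as the paper: compute the higher direct images of $\O(-m-r)$, apply the projection formula and the Leray spectral sequence to obtain \eqref{cohomE:eq}, and then use Lemma~\ref{oneO} with an ample line bundle of the form $\O(1)\otimes\pi^*\O_X(\ell)$ to pass from vanishing against sheaves pulled back from $X$ to genuine $(r-1)$-ampleness. The only cosmetic difference is that you rederive the formula $R^{r-1}\pi_*\O(-m-r)\cong \Sym^m\E\otimes\det\E$ via the relative Euler sequence and relative Serre duality, whereas the paper simply cites it from \cite[Appendix A]{Laz04}; both invoke the characteristic-zero identification $(\Sym^m\E^*)^\vee\cong\Sym^m\E$ at the same point.
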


\begin{proof}Recall that line bundles on $\PP(\E^*)$ are of the form $\O(a)\otimes \pi^*\L$ where $\L$ is a line bundle on $X$. By a well-known formula for higher direct images of line bundles on $\pi:\PP(\E^*)\to X$, we have for $m\ge 0$, $$\pi_*\O(m)= \Sym^m \E^*,\qquad R^{r-1}\pi_*\O(-m-r)= \Sym^m \E \otimes \det \E,$$and all other direct images vanish (see \cite[Appendix A]{Laz04}). Using the projection formula, we get \eqref{cohomE:eq} by the Leray spectral sequence. Here we are implicitly using the condition char$(k)=0$ for the isomorphism $(\Sym^m \E^*)^*\cong\Sym^m \E$. 

To prove the last statement, note that the above formula implies that $\E$ is ample if $\O(-1)$ is $(r-1)$-ample. Conversely, taking $\F=\L \otimes \det \E$ above, we see that  if $\E$ is ample, we have
$$H^i(\PP(\E^*),\pi^*\L(-m))=0$$ for $m\gg 0$, $i\ge r$ and any line bundle $\L$ on $X$. In other words, high multiples of $\O(-1)$ kill cohomology of any line bundle in degrees $\ge r$ and it is therefore $(r-1)$-ample by Lemma \ref{oneO}.
\end{proof}

\begin{remark}\label{Gammaample} This is one of the few places in the paper where we use the characteristic zero assumption. A variant of the above result
can be obtained in positive characteristic by replacing ampleness of $\E$
by Hartshorne's notion of \emph{$\Gamma$-ampleness}, i.e., that
$H^i(X,\Gamma^m(\E)\otimes \F)=0$ for $i>0$ and $m$ large, where
$\Gamma^m(\E)=(\Sym^m \E^*)^*$  (see \cite[III.4]{Har70} for details).
\end{remark}

\begin{cor}\label{NYample}
Let $Y\subset X$ be a locally complete intersection subscheme of
codimension $r$. Then the normal bundle $\N_{Y|X}$ is ample if and
only if $\O_E(E)=\O_E(-1)$ is $(r-1)$-ample on the exceptional divisor
$E$ of the blow-up. In particular, if $Y$ is ample in $X$, then $\N_{Y|X}$ is an ample vector bundle.
\end{cor}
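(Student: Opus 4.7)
The plan is to apply Proposition \ref{cohomE} directly, using the standard identification of the exceptional divisor of a blow-up along an lci subscheme with a projective bundle.

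First, since $Y\subset X$ is locally complete intersection, the conormal sheaf $\I_Y/\I_Y^2$ is a locally free $\O_Y$-module of rank $r$, and the blow-up $\pi:X'\to X$ realizes the exceptional divisor as the projective bundle
\[
E=\PP\bigl(\I_Y/\I_Y^2\bigr)=\PP(\N_{Y|X}^*),
\]
with the canonical identification $\O_E(E)=\O_E(-1)$ (the tautological quotient line bundle in our $\Proj(\Sym^*\,\cdot\,)$ convention). Taking $\E=\N_{Y|X}$ in Proposition \ref{cohomE}, so that $\PP(\E^*)=\PP(\N_{Y|X}^*)=E$, we read off immediately that $\N_{Y|X}$ is ample if and only if $\O_E(-1)=\O_E(E)$ is $(r-1)$-ample on $E$. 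This gives the equivalence asserted in the corollary.

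For the final ``in particular'' statement, suppose that $Y$ is ample in $X$, that is, $\O_{X'}(E)$ is $(r-1)$-ample on $X'$. Since the closed immersion $i:E\hookrightarrow X'$ is a finite morphism, Proposition \ref{basechange}(iii) yields that the pullback $i^*\O_{X'}(E)=\O_E(E)$ is $(r-1)$-ample on $E$. Combining with the equivalence established above, we conclude that $\N_{Y|X}$ is an ample vector bundle.

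The only slightly delicate point is the identification of $\O_E(E)$ with $\O_{\PP(\N_{Y|X}^*)}(-1)$ in our sign convention, but this is standard for blow-ups along lci centers; once that is recorded, the corollary is a direct consequence of Proposition \ref{cohomE} together with the finite-pullback part of Proposition \ref{basechange}.
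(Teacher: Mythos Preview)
Your proof is correct and follows essentially the same approach as the paper: identify $E$ with $\PP(\N_{Y|X}^*)$ under the lci hypothesis, apply Proposition \ref{cohomE}, and then use that $q$-ampleness is preserved under restriction to a closed subscheme for the final statement. The paper phrases the restriction step more informally (just saying ``so is the restriction'') rather than citing Proposition \ref{basechange}(iii), but the content is identical.
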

\begin{proof}
Simply recall that when $Y$ is lci, the exceptional divisor $E$ can be identified with the bundle
$\PP(\N_{Y|X}^*)$ and use the previous proposition. For the last
statement, note that if $E$ is $(r-1)-$ample on the blow-up $X'$, then
so is the restriction $\O_{X'}(E)\big|_E=\O_E(-1)$, and hence $\N_{Y|X}$ is an ample vector bundle.
\end{proof}
Of course, the converse of this corollary is false in general since there are
non-ample divisors with ample normal bundle (e.g. the pullback of an ample divisor on a blow-up). We do however have the following result:

\begin{lemma}\label{stabilizes}
Let $Y$ be a locally complete intersection subscheme of codimension $r$ such that $\N_{Y|X}$ is an ample vector bundle. Then for any coherent sheaf $\F$ on $X'$, the maps
$$
H^i(X',\F\otimes \O_{X'}((m-1)E))\to H^i(X',\F\otimes \O_{X'}(mE))
$$are isomorphisms for $i\ge r$ and $m$ sufficiently large.
\end{lemma}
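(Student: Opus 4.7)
\emph{Proof proposal.} The plan is to analyze the transition map through the canonical section $s_E$ of $\O_{X'}(E)$: tensoring the short exact sequence $0 \to \O_{X'}((m-1)E) \to \O_{X'}(mE) \to \O_E(mE) \to 0$ with $\F$ will reduce matters to the cohomology of $\F|_E(mE) = \F|_E \otimes \O_E(-1)^{\otimes m}$, and by Corollary~\ref{NYample} the hypothesis that $N_{Y|X}$ is ample is precisely the statement that $\O_E(-1)$ is $(r-1)$-ample on $E$.

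I would first reduce to the case where $\F$ has no $s_E$-torsion. Let $\T \sub \F$ be the subsheaf of local sections annihilated by some power of $s_E$; then $\T$ is a coherent sheaf supported on an infinitesimal thickening $E^{(k)} = V(I_E^{k+1})$ of $E$, and since $(E^{(k)})_{\mathrm{red}} = E$, Proposition~\ref{basechange}(i) implies that $\O(E)|_{E^{(k)}}$ is also $(r-1)$-ample. Therefore $H^j(X', \T(lE)) = 0$ for $j \ge r$ and $l \gg 0$, and the long exact sequence attached to $0 \to \T \to \F \to \F/\T \to 0$ (twisted by $\O((m-1)E)$ and by $\O(mE)$ respectively) identifies the map in question with the corresponding map for the torsion-free quotient $\F/\T$. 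I therefore assume $\F$ is $s_E$-torsion-free, so that $0 \to \F((m-1)E) \to \F(mE) \to \F|_E(mE) \to 0$ is short exact with long exact sequence
\begin{equation*}
\cdots \to H^{i-1}(E, \F|_E(mE)) \to H^i(X',\F((m-1)E)) \to H^i(X',\F(mE)) \to H^i(E, \F|_E(mE)) \to \cdots .
\end{equation*}
The $(r-1)$-ampleness of $\O_E(-1)$ yields $H^j(E, \F|_E \otimes \O_E(-1)^{\otimes m}) = 0$ for $j \ge r$ and $m \gg 0$. Hence for $i \ge r+1$ both flanking terms vanish and the middle map is an isomorphism, while for $i = r$ only the right-hand term vanishes, giving merely surjectivity.

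To complete the $i = r$ case, I would invoke finite-dimensionality: on the projective scheme $X'$ each $H^r(X', \F(mE))$ is a finite-dimensional $k$-vector space, and the chain of surjections for $m \gg 0$ makes $\{\dim_k H^r(X', \F(mE))\}$ a non-increasing sequence of non-negative integers, which must eventually stabilize. Surjections between equal-dimensional vector spaces are isomorphisms, and combining the resulting threshold with those for the finitely many degrees $r < i \le \dim X'$ yields a single $m_0$ beyond which the map is an isomorphism in every degree $i \ge r$.

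The main obstacle is exactly the injectivity at $i = r$: the $(r-1)$-ampleness of $\O_E(-1)$ provides no vanishing information about $H^{r-1}(E, \F|_E(mE))$, so the long exact sequence alone cannot force injectivity. The dimension-stabilization argument, enabled by the projectivity of $X'$, is the essential workaround.
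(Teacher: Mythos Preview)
Your proof is correct and follows the same strategy as the paper: tensor the short exact sequence for $\O_E$ with $\F$, use Corollary~\ref{NYample} to kill $H^i(E,\F|_E(mE))$ for $i\ge r$, deduce surjectivity, and then invoke finite-dimensionality to upgrade to isomorphisms. Your explicit reduction to the $s_E$-torsion-free case is a refinement the paper's proof glosses over (it simply writes down the long exact sequence as if the tensored short exact sequence were automatically left-exact), but the core argument is identical.
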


\begin{proof}
As the vector spaces in question are finite dimensional, it suffices to show that the above maps are eventually surjective. 
Consider the exact sequence 
$$
\cdots \to H^i(X',\F((m-1)E)) \to H^i(X',\F(mE)) \to H^i(E, \F(-m)\big|_E) \to \cdots
$$If $N_{Y|X}$ is ample, the previous corollary implies that the groups on the right vanish for $i\ge r$
and $m$ large, giving the desired conclusion.
\end{proof}



We now turn to zero loci of sections of ample vector bundles, which are in many ways the prototypes of ample subvarieties. 

\begin{proposition}\label{c.i}
Let $\E$ be an ample vector bundle on $X$ of rank $r\le n$ and $Y$ be the zero-set of a global section $s\in H^0(X,\E)$. If the codimension of $Y$ is $r$, then $Y$ is ample in $X$.
\end{proposition}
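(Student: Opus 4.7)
The plan is to realize the blow-up $X' = \Bl_Y X$ as a subscheme of $P = \PP(\E^*)$ and to identify the exceptional divisor line bundle $\O_{X'}(E)$ with $\O_P(-1)|_{X'}$. Granted that identification, since $\E$ is ample, Proposition \ref{cohomE} shows $\O_P(-1)$ is $(r-1)$-ample on $P$, and Proposition \ref{basechange}(iii) transfers $(r-1)$-ampleness under the closed (hence finite) immersion $X' \incl P$, which concludes the proof. Because $Y$ has codimension equal to the rank of $\E$, the Koszul complex of $s$ is exact and $Y$ is lci with $\N_{Y|X}\cong\E|_Y$, so the blow-up machinery of Section \ref{section:defn} applies.

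For the embedding, let $p : P\to X$ and consider the tautological exact sequence
$$0 \to \O_P(-1) \to p^*\E \to Q \to 0,$$
with $Q$ of rank $r-1$. The section $s$ pulls back to $\tilde s\in H^0(P,p^*\E)$, whose image in $Q$ is a section $\bar s\in H^0(P,Q)$; let $Z\subset P$ be its zero scheme. Over $x\notin Y$ the fiber of $Z$ is the single point $[\langle s(x)\rangle]\in\PP(\E^*_x)$ (the line spanned by $s(x)$), while over $y\in Y$ the section $\bar s$ vanishes identically and the fiber is all of $\PP(\E^*_y)\cong\PP^{r-1}$. A dimension count then gives $\dim Z = n$, so $\bar s$ is a regular section of a rank $r-1$ bundle and $Z$ is lci of codimension $r-1$. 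Moreover, on $Z$ the lift of $\tilde s$ through the subbundle inclusion is a canonical section $\hat s\in H^0(Z,\O_P(-1)|_Z)$ whose zero scheme is $p^{-1}(Y)\cap Z$.

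To identify $Z$ with $X'$, I would check that the canonical morphism $Z\to X$ makes $\I_Y\cdot\O_Z$ invertible: indeed it is locally generated by $\hat s$ via the composition $\O_Z \xrightarrow{\hat s}\O_P(-1)|_Z \hookrightarrow p^*\E|_Z$ dualizing to the pullback of the defining map $\E^*\to\O_X$ of $\I_Y$. By the universal property of the blow-up this factors through $X'$, and the resulting birational morphism $Z\to X'$ is an isomorphism by standard Cohen--Macaulay arguments (both sides have pure dimension $n$, are lci, and agree over the dense open $X\setminus Y$). Under this identification, $\hat s$ becomes a section of $\O_P(-1)|_{X'}$ with zero divisor equal to $E$, yielding $\O_{X'}(E)\cong\O_P(-1)|_{X'}$ and completing the argument. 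The main obstacle is the scheme-theoretic identification $Z\cong X'$: the set-theoretic and fiberwise picture is transparent, but promoting it to an isomorphism of schemes requires the Koszul regularity of $\bar s$ together with the universal property of the blow-up applied to $\I_Y\cdot\O_Z$.
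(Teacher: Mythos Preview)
Your proposal is correct and follows the same overall strategy as the paper: embed the blow-up $X'$ into $\PP(\E^*)$, identify $\O_{X'}(E)$ with the restriction of $\O_{\PP(\E^*)}(-1)$, and then invoke Proposition~\ref{cohomE} together with restriction to a closed subscheme. The difference lies entirely in how the embedding is produced.

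The paper does it in one line: the section $s:\O_X\to\E$ dualizes to a surjection $s^*:\E^*\twoheadrightarrow\I_Y$, which induces a surjection of graded $\O_X$-algebras $\Sym\E^*\twoheadrightarrow\bigoplus_{m\ge0}\I_Y^m$; applying $\Proj$ gives the closed immersion $i:X'=\Bl_YX\hookrightarrow\PP(\E^*)$ with $i^*\O(1)=\O_{X'}(-E)$ automatically. No dimension counts, no regularity of auxiliary sections, and no Cohen--Macaulay arguments are needed; in particular the argument works for an arbitrary projective scheme $X$. Your route via the incidence scheme $Z=V(\bar s)\subset\PP(\E^*)$ and the universal property of the blow-up is the standard geometric picture of the same embedding, and it has the virtue of exhibiting $X'$ concretely as the zero locus of a section of the rank-$(r-1)$ quotient bundle $Q$. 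The cost is exactly the obstacle you flag: you must check that $\bar s$ is regular and that the birational map $Z\to X'$ is a scheme-theoretic isomorphism, which pulls in mild hypotheses (or extra work) that the algebraic argument avoids. Once you have the surjection $s^*:\E^*\to\I_Y$, the $\Proj$ argument subsumes all of this.
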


\begin{proof}
Note that the section $s:\O_X\to \E$ induces a surjection $s^*:\E^*\to \I_Y$. Taking symmetric powers, we get a surjection $\Sym \E^*\to \bigoplus_{m\ge 0} \I_Y^m$. Then taking $\Proj$ this gives an embedding of the blow-up of $X$ with center $Y$ into $\PP(\E^*)$,
$$
i:X'=\Bl_Y X \hookrightarrow \PP(\E^*) 
$$under which $i^*\O_{\PP(\E^*)}(1)=\O_{X'}(-E)$. Now since $\E$ is
ample the line bundle $\O_{\PP(\E^*)}(1)$ is ample on $\PP(\E^*)$ by
Proposition \ref{cohomE}. Restricting this line bundle to $X'$, we see
that $E$ is also $(r-1)$-ample and the result follows.
\end{proof}In particular, taking $\E$ to be a direct sum of ample line bundles, we see that any complete intersection subscheme of $X$ is ample.

\section{Cohomology of the complement}\label{complement}

In the classical setting, if $D$ is an effective ample divisor on $X$,
then the complement $U=X\setminus D$ is affine.  By Serre's
characterisation of affineness this is equivalent to the vanishing of
the cohomology groups $H^i(U,\F)$ for all $i>0$ and any coherent sheaf
$\F$ on $X$. Letting $\cd(U)$ denote the cohomological dimension of
$U$, i.e., the smallest integer $r$ such that $H^i(U,\F)=0$ for all
$i>r$, we will generalize this statement by showing that
$\cd(X\setminus Y)=r-1$ for an ample codimension $r$ subscheme
$Y\subset X$. This result has in turn many implications for the geometric properties of $Y$ (cf. Corollary \ref{COR}). 

Note that since any coherent sheaf on the open subset $U$ extends to a coherent sheaf on $X$, it follows that we need only check the condition $H^i(U,\F)=0$ for sheaves of the form $\F=\G\big|_U$ for $\G$ a coherent sheaf  on $X$.

\begin{proposition}\label{qample:cd}
 Let $X$ be a projective scheme. If $U$ is an open subset of $X$ such that $X\setminus U$ is the support of an effective $q$-ample divisor $D$, then $\cd(U)\le q$.
\end{proposition}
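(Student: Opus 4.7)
The plan is to reduce the vanishing of $H^i(U,\F)$ for $i>q$ to the defining property of the $q$-ample divisor $D$ via a \v Cech-theoretic direct limit formula. First, since every coherent sheaf on a Noetherian open subscheme extends to the whole space, it is enough to prove $H^i(U,\G|_U)=0$ for $i>q$ and every coherent sheaf $\G$ on $X$.

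The central step is to establish the identification
\[
H^i(U,\G|_U)\;\cong\;\varinjlim_{m}H^i(X,\G\otimes\O(mD)),
\]
where the transition maps in the direct system are induced by multiplication by the canonical section $s\in H^0(X,\O(D))$ cutting out $D$. To prove this, I would choose a finite affine open cover $\{U_\alpha\}$ of $X$ that simultaneously trivializes $\O(D)$, so that on each $U_\alpha$ the divisor $D$ is cut out by a single regular function $f_\alpha$. Then $U_\alpha\cap U$ is the principal open subset $(U_\alpha)_{f_\alpha}$, and every finite intersection among the $U_\alpha\cap U$ remains affine. Hence both $\{U_\alpha\}$ and $\{U_\alpha\cap U\}$ are Leray covers for coherent sheaves, and on the level of \v Cech cochains there is a natural identification
\[
\varinjlim_{m}\check C^\bullet\bigl(\{U_\alpha\},\,\G\otimes\O(mD)\bigr)\;=\;\check C^\bullet\bigl(\{U_\alpha\cap U\},\,\G|_U\bigr),
\]
because taking sections of $\G\otimes\O(mD)$ over any intersection $U_{\alpha_0\cdots\alpha_k}$ and following the transition maps (multiplication by the appropriate $f_\alpha$) recovers, in the colimit, the localization giving sections of $\G$ on the corresponding intersection in $U$. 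Passing to cohomology and using that cohomology commutes with filtered colimits on a Noetherian space gives the direct limit formula.

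Granted the formula, the conclusion is immediate: by the definition of $q$-ampleness, for every $i>q$ and every coherent sheaf $\G$ on $X$ one has $H^i(X,\G\otimes\O(mD))=0$ for $m\gg 0$, so each term in the direct system eventually vanishes and hence so does its colimit. This yields $H^i(U,\G|_U)=0$ for all $i>q$, and therefore $\cd(U)\le q$.

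The main obstacle is really the direct limit formula rather than any positivity input: one must use the Cartier/effectiveness hypothesis on $D$ crucially to arrange that the intersections $(U_{\alpha_0\cdots\alpha_k})\cap U$ stay affine (so that \v Cech cohomology computes $H^\bullet(U,-)$ for coherent sheaves), and then carefully interchange the filtered colimit with \v Cech cochains and with cohomology. Once this standard identification is in place, $q$-ampleness of $D$ translates directly into the desired bound on $\cd(U)$.
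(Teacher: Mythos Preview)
Your proof is correct and reaches the same key identity as the paper, namely
\[
H^i(U,\G|_U)\cong \varinjlim_m H^i(X,\G\otimes\O_X(mD)),
\]
from which the bound $\cd(U)\le q$ follows immediately from $q$-ampleness. The difference lies in how this identity is established. You compute it directly at the level of \v Cech complexes: choosing an affine cover trivializing $\O_X(D)$, observing that the intersections with $U$ are distinguished affines, and identifying the colimit of the \v Cech groups with the localized sections. The paper instead argues abstractly via universal $\delta$-functors: it first checks the identity for $i=0$ using \cite[Lemma II.5.3]{Har77}, and then notes that both $\F\mapsto H^i(U,\F|_U)$ and $\F\mapsto \varinjlim H^i(X,\F(mD))$ are the derived functors of the same left-exact functor on quasi-coherent sheaves, hence agree in all degrees. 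Your approach is more hands-on and makes the role of the Cartier hypothesis (that $U_\alpha\cap U$ is a principal open) completely explicit; the paper's approach is slicker and avoids any cover bookkeeping, at the cost of invoking the machinery of universal $\delta$-functors and working with quasi-coherent (not just coherent) sheaves to have enough injectives. Either way, the positivity input is used only at the very end.
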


\begin{proof}
We will prove the following: For any quasi-coherent sheaf $\F$ on $X$,
\begin{equation}\label{limH}
  H^i(U,\F\big|_U)=\varinjlim H^i(X,\F(mD))
\end{equation}from which the proposition follows immediately. We first
prove \eqref{limH} for $i=0$, i.e., that the restriction of sections,
$\varinjlim H^0(X,\F(mD))\to H^0(U,\F\big|_U)$ isomorphism. This map is injective by \cite[Lemma II.5.3a]{Har77} and surjective because any global section  $s\in H^0(U,\F\big|_U)$ extends to a section in $H^0(X,\F(mD))$ for some $m\ge 0$ by \cite[Lemma II.5.3b]{Har77}. 

To prove \eqref{limH} in general, we use a $\delta$-functor argument. Consider the functor from the category of quasi-coherent sheaves on $X$ to $k$-vector spaces given by $F(\F)=H^0(X,\F\big|_U)$. This is the composition of the functor $\F\to \F\big|_U$, which is exact, and $\G \to H^0(U,\G)$, which is left-exact. From this it follows that $F$ is left-exact and that the derived functors $R^i F$ coincide with $\F \to H^i(U,\F\big|_U)$, by the Grothendieck spectral sequence. 

Similarly, consider the functor $G(\F)=\varinjlim H^0(X,\F(mD))$ which is also a left-exact functor on quasi-coherent sheaves. Since cohomology commutes with direct limits, the derived functors of $G$ coincide with the functors $\F\to \varinjlim H^i(X,\F(mD))$ for $i\ge 0$. Since $F$ and $G$ are left exact their derived functors $R^i F$, $R^i G$ form universal $\delta$-functors. Finally, by the above, we have $R^0 F = R^0 G$ and so they also have the same higher derived functors. This completes the proof of \eqref{limH}. 
\end{proof}

As an application of this result we prove the following version of the
Lefschetz hyperplane theorem, which was proved for integral cohomology by Sommese
\cite[Proposition 1.16]{Som} under the additional assumption that $D$ is semiample.

\begin{cor}[Generalized Lefschetz hyperplane theorem]
  Let $D$ be an effective $q$-ample divisor on a complex projective variety $X$ such that $X-D$ is non-singular. Then $H^i(X,\QQ)\to H^i(D,\QQ)$ is an isomorphism  for $0\le i< n-q-1$ and injective for $i=n-q-1$.
\end{cor}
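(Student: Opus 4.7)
The plan is to convert the statement into a vanishing problem on the complement $U = X \setminus D$ and then use the coherent cohomological dimension bound from Proposition \ref{qample:cd} to get a topological vanishing via Hodge theory.

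First, I would invoke the long exact sequence of the pair $(X,D)$ with rational coefficients
\[
\cdots \to H^i(X,D;\QQ) \to H^i(X,\QQ) \to H^i(D,\QQ) \to H^{i+1}(X,D;\QQ)\to \cdots
\]
and reduce the corollary to the assertion $H^i(X,D;\QQ)=0$ for all $0\le i\le n-q-1$: indeed, the vanishing of both $H^i(X,D;\QQ)$ and $H^{i+1}(X,D;\QQ)$ forces isomorphism, while the vanishing of just $H^i(X,D;\QQ)$ forces injectivity.

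Next, since $X$ is a complex projective variety (in particular compact Hausdorff) and $D \subset X$ is closed with $U = X\setminus D$ a smooth complex manifold of real dimension $2n$, one has the standard identification $H^i(X,D;\QQ) \cong H^i_c(U,\QQ)$, and Poincar\'e duality on the oriented manifold $U$ together with universal coefficients gives
\[
H^i_c(U,\QQ) \cong H_{2n-i}(U,\QQ) \cong H^{2n-i}(U,\QQ)^{*}.
\]
So the problem is reduced to showing $H^{j}(U,\QQ)=0$ for all $j>n+q$.

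For the latter, I would use the algebraic de Rham theorem of Grothendieck: for the smooth variety $U$ one has $H^{j}(U,\CC) \cong \HH^{j}(U,\Omega^{\bullet}_{U})$, and the Hodge-to-de Rham spectral sequence
\[
E_1^{p,q'} = H^{q'}(U,\Omega^{p}_{U}) \Longrightarrow H^{p+q'}(U,\CC).
\]
Now Proposition \ref{qample:cd} gives $\cd(U)\le q$, so the coherent sheaves $\Omega^{p}_{U}$ satisfy $H^{q'}(U,\Omega^{p}_{U})=0$ for $q'>q$; combined with the trivial vanishing $\Omega^{p}_{U}=0$ for $p>n$, only bidegrees with $p+q'\le n+q$ can contribute, so $H^{j}(U,\CC)=0$ for $j>n+q$, and hence the same vanishing holds with $\QQ$ coefficients.

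Putting the three steps together, for $0\le i<n-q$ we have $2n-i>n+q$, so $H^{i}(X,D;\QQ)=0$, which yields the required isomorphism and injectivity range. The main technical obstacle is the passage from the coherent-cohomological bound $\cd(U)\le q$ to the topological vanishing $H^{j}(U,\QQ)=0$ for $j>n+q$; this is precisely where the algebraic de Rham theorem (and hence the characteristic-zero assumption) is essential. The Lefschetz-duality identification is standard but worth stating carefully because $X$ and $D$ are allowed to be singular along $D$ --- only the openness and smoothness of $U$ is being used.
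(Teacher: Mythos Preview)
Your proof is correct and follows essentially the same route as the paper: reduce via the long exact sequence of the pair to vanishing of relative cohomology, translate by Lefschetz/Poincar\'e duality to vanishing of $H^j(U,\CC)$ for $j>n+q$, and deduce the latter from the de Rham spectral sequence together with the bound $\cd(U)\le q$ from Proposition~\ref{qample:cd}. Your write-up is in fact somewhat more careful than the paper's in spelling out the duality step and the role of the algebraic de Rham theorem.
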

\begin{proof}
 It suffices to show the statement for cohomology with coefficients in
 $\CC$. Also, by the long exact sequence of relative cohomology it
 suffices to show that $H^i(X,D,\CC)=0$ for $i< n-q$, or equivalently by Lefschetz duality, that $H^i(X\setminus D,\CC)=0$ for $i> n+q$. But this is clear from the spectral sequence of de Rham cohomology $E_1^{st}=H^s(X-D,\Omega^t)\Rightarrow H^{s+t}(X-D,\CC)$, since by the previous proposition, the groups $H^s(X-D,\Omega^t)$ all vanish for $s+t> n+q$. \end{proof}

\noindent Using this result we find the following version of the Lefschetz hyperplane theorem for ample subvarieties. 
\begin{cor}\label{Lefschetz}
Let $X$ be a smooth complex projective variety and let $Y$ be an ample lci subscheme. Then $H^i(X,\QQ)\to H^i(Y,\QQ)$ is an isomorphism for $i<\dim Y$ and injective for $i=\dim Y$.
\end{cor}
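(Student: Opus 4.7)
The plan is to re-run the proof of the preceding corollary in this setting, exploiting that the complement $U := X \setminus Y$ depends only on the pair $(X,Y)$ and not on the blow-up. The crucial input is a bound on $\cd(U)$ that follows from $q$-ampleness of the exceptional divisor.

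First, let $\pi \colon X' \to X$ be the blow-up of $X$ along $Y$, with exceptional divisor $E$. By the definition of ampleness of $Y$, the divisor $E$ is effective and $(r-1)$-ample on $X'$. Proposition \ref{qample:cd} therefore gives $\cd(X' \setminus E) \le r - 1$, and since $\pi$ restricts to an isomorphism $X' \setminus E \to X \setminus Y$, we obtain $\cd(U) \le r - 1$.

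It suffices to prove the statement with coefficients in $\CC$. By the long exact sequence of the pair $(X,Y)$, this reduces to showing
$$H^i(X,Y;\CC) = 0 \qquad \text{for } i \le \dim Y = n - r.$$
By Lefschetz duality applied to the compact oriented $2n$-manifold $X$ and the closed subvariety $Y$, this vanishing is equivalent to
$$H^j(U,\CC) = 0 \qquad \text{for } j \ge n + r.$$
The latter follows exactly as in the proof of the preceding corollary. Namely, $U$ is a smooth complex variety of dimension $n$, and the algebraic de Rham spectral sequence $E_1^{s,t} = H^t(U,\Omega_U^s) \Rightarrow H^{s+t}(U,\CC)$ has $E_1^{s,t} = 0$ whenever $t > r - 1$ (by the bound on $\cd(U)$) or $s > n$ (since $\dim U = n$). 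Thus $H^j(U,\CC) = 0$ for $j > n + r - 1$, as required.

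The only subtle point is the invocation of Lefschetz duality for the pair $(X,Y)$ when $Y$ may be singular. However, $Y$ is a closed algebraic subvariety of the smooth complex manifold $X$, hence triangulable and admitting a mapping cylinder neighborhood, which is all that is needed for the classical Lefschetz duality isomorphism $H^i(X,Y;\QQ) \cong H_{2n-i}(U;\QQ)$ to apply; no additional smoothness on $Y$ is required.
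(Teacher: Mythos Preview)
Your proof is correct and follows essentially the same line as the paper's. The only cosmetic difference is that the paper applies the preceding corollary directly to the pair $(X',E)$ on the blow-up and then invokes the identification $H^i(X,Y;\CC)=H^i(X',E;\CC)$ (both being $H^i_c(U;\CC)$), whereas you apply Lefschetz duality directly to the pair $(X,Y)$; since $X\setminus Y\cong X'\setminus E$, both routes reduce to the same de Rham spectral sequence computation on $U$ and your remark on the validity of Lefschetz duality for possibly singular $Y$ is well taken.
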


\begin{proof}
Let $r$ denote the codimension of $Y$ in $X$ and let $\pi:X'\to X$ be the blow-up of
$X$ along $Y$. Since the exceptional divisor $E$ is $(r-1)$-ample, it
follows from the previous corollary that $H^i(X,Y;\CC)=H^i(X',E;\CC)=0$ for
$i\le \dim Y$. \end{proof}

The following theorem completely describes  ample lci subschemes in terms of the geometric properties studied in Hartshorne's book \cite{Har70}.
\begin{theorem}\label{amplecd}
 Let $Y$ be a locally complete intersection subscheme of codimension $r$ of a smooth projective variety $X$. Then $Y$ is ample in $X$ if and only if $\N_{Y|X}$ is ample and $\cd(X\setminus Y)=r-1$.
\end{theorem}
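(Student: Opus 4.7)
The plan is to treat the two directions separately, with the reverse implication being the substantive part. For the forward direction, assume $Y$ is ample in $X$, so $E$ is $(r-1)$-ample on the blow-up $X'$. Then Corollary \ref{NYample} immediately gives that $\N_{Y|X}$ is ample. Applying Proposition \ref{qample:cd} to the effective $(r-1)$-ample divisor $E$ on $X'$ yields $\cd(X'\setminus E)\le r-1$, and since $X'\setminus E\cong X\setminus Y$ under $\pi$, we get $\cd(X\setminus Y)\le r-1$. The matching lower bound $\cd(X\setminus Y)\ge r-1$ is a general fact for a codimension-$r$ closed subscheme of a smooth variety; it follows, for instance, from Hartshorne's work on cohomological dimension together with local duality, using that $Y$ being lci of codimension $r$ forces $H^r_Y(X,\O_X)\ne 0$.

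For the reverse direction, assume $\N_{Y|X}$ is ample and $\cd(X\setminus Y)=r-1$. I must show that for every coherent sheaf $\F$ on $X'$, every $i\ge r$, and $m\gg 0$ one has $H^i(X',\F\otimes\O_{X'}(mE))=0$. The key point is that Lemma \ref{stabilizes} is available: $\N_{Y|X}$ is ample, and $X'$ is Gorenstein since $Y$ is lci in the smooth scheme $X$. So for fixed $\F$ and $i\ge r$, the transition maps $H^i(X',\F((m-1)E))\to H^i(X',\F(mE))$ are isomorphisms for $m$ large, and the stable value coincides with $\varinjlim_m H^i(X',\F(mE))$.

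I would then identify this direct limit with the cohomology of the complement by repeating the $\delta$-functor argument from the proof of Proposition \ref{qample:cd}, applied on $X'$: the two functors $\F\mapsto \varinjlim_m H^i(X',\F(mE))$ and $\F\mapsto H^i(X'\setminus E,\F|_{X'\setminus E})$ are universal derived functors of left-exact functors on quasi-coherent sheaves on $X'$ that agree in degree zero, hence agree in all degrees. Since $X'\setminus E\cong X\setminus Y$ and $\cd(X\setminus Y)=r-1$, the limit vanishes for $i\ge r$. Combined with the stabilization, this forces $H^i(X',\F(mE))=0$ for $i>r-1$ and $m\gg 0$, i.e.\ $E$ is $(r-1)$-ample, proving $Y$ is ample.

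The main obstacle is bridging the stabilized cohomology on $X'$ with the cohomological dimension hypothesis on $X\setminus Y$; once the $\delta$-functor identification from Proposition \ref{qample:cd} is transplanted to $X'$, the argument closes. The secondary subtlety is the lower bound $\cd(X\setminus Y)\ge r-1$ in the forward direction, which I take as a standard cohomological-dimension fact rather than reproving it here.
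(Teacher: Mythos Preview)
Your proof is correct and follows essentially the same route as the paper: both directions invoke Corollary \ref{NYample}, Proposition \ref{qample:cd} (in particular the identification \eqref{limH} of $\varinjlim H^i(X',\F(mE))$ with $H^i(X'\setminus E,\F)$), and Lemma \ref{stabilizes}, with the lower bound $\cd(X\setminus Y)\ge r-1$ taken from Hartshorne. The only cosmetic difference is that the paper restricts to locally free $\F$ on $X'$ via Lemma \ref{oneO}, whereas you treat all coherent sheaves directly, and your aside that $X'$ is Gorenstein is not actually needed to invoke Lemma \ref{stabilizes}.
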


\begin{proof}
Suppose first that $Y$ is ample in $X$. By Corollary \ref{NYample} the normal bundle $\N_{Y|X}$ is ample so we must show that $\cd(X\setminus Y)=r-1$. As before, let $X'$ be the blow-up of $Y$. We have $X\setminus Y \cong X'\setminus E$, so it suffices to show that $\cd(X'\setminus E')=r-1$. But since $E$ is $(r-1)$-ample on $X'$, it follows from Lemma \ref{qample:cd} that $\cd(X'\setminus E)\le r-1$. Since in any case $\cd(X\setminus Y)\ge r-1$ for any codimension $r$ subscheme of $X$ \cite[p. 99]{Har70}, we get $\cd(X-Y)=r-1$.

Conversely, suppose now that $\N_{Y|X}$ is ample and that the condition $\cd(U)=r-1$ holds. To show that $E$ is $(r-1)$-ample, it suffices to show that for any locally free sheaf $\F$ on $X$ $H^i(X,\F(mE))=0$ for $i \ge r$ and some $m$ sufficiently large. Now, since $\N_{Y|X}$ is ample, Lemma \ref{stabilizes} shows that for all $i\ge r$ and some large $m$, $H^i(X,\F(mE))\cong \varinjlim H^i(X, \F(kE))$. But by \eqref{limH} the last group equals $H^i(X-Y,\F)$, which is zero by assumption.
\end{proof}

\begin{example}The following example shows that the
ampleness assumption on $N_{Y|X}$ is necessary in the above theorem.
It is a variant of Hironaka's construction of a non-ample divisor $Y$ with affine complement (see
\cite{Goodman}). 

Let $X$ be the nodal threefold $z_0z_1=z_2z_3$ in $\PP^{4}$ with a
singularity at $p=$[0,0,0,0,1] and let $\pi:X'\to X$ be one of the two projective
small resolutions of $X$, with exceptional locus a curve $C$ isomorphic to $\PP^1$. Explicit equations for $X'$ are given by
$x_0y_1=x_1y_0$, $x_2y_1=x_3y_0$ in $\PP^4\times \PP^1$.  Let $Y$ be
the divisor given by $x_0+x_1+x_2=0$. From this it is easily verified that $Y$ is smooth
and that $X'\setminus Y\cong X\setminus \{z_0+z_1+z_2=0\}$ is affine. However, $Y$ is
not an ample divisor, since $Y\cdot C=0$. 
\end{example}

The next result summarises the implications of these two conditions to the geometric properties of $Y$.

\begin{cor}[Properties of ample subschemes]\label{COR}
  Let $Y\subseteq X$ be a non-singular ample subscheme of dimension $s\ge 1$. Then $Y$ satisfies the following:
  \begin{enumerate}[(i)]
  \item The normal bundle of $Y$ is ample. 
   \item $Y$ is numerically positive, i.e., $Y\cdot Z > 0$ for all irreducible $(n-s)$-dimensional varieties $Z$. In particular, $Y$ meets every divisor.
  \item The Lefschetz hyperplane theorem for rational cohomology holds on $Y$, i.e., $$H^i(X,\QQ)\to H^i(Y,\QQ)$$ is an isomorphism for $i<s$ and an injection for $i=s$. 
     \item If $\hat{X}$ denotes the completion of $X$ with respect to $Y$, then for any coherent sheaf $\F$ on $X$, $$H^i(X,\F)\to H^i(\hat{X},\F)$$ is an isomorphism for $i<s$ and an injection for $i=s$. 
      \item $Y$ is G3, i.e., $k(X)\cong k(\hat{X})$.
  \end{enumerate}
\end{cor}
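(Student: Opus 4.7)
My plan is to dispose of the five items in order, leaning heavily on the two results just proved, namely Corollary~\ref{NYample} and Theorem~\ref{amplecd}, and then invoking classical comparison theorems.

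Item (i) is immediate: a non-singular subscheme is automatically lci, so Corollary~\ref{NYample} applies directly to give ampleness of $\N_{Y|X}$. Item (iii) is nothing but Corollary~\ref{Lefschetz} repeated for the reader's convenience. Thus the real work is in (ii), (iv) and (v), and for all three the bridge is the identity $\cd(X\setminus Y)=r-1$ supplied by Theorem~\ref{amplecd}, where $r=n-s=\codim Y$.

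For (ii) I would first observe that if an irreducible $(n-s)$-dimensional variety $Z$ were disjoint from $Y$, then $Z$ would be a closed subscheme of $X\setminus Y$ of dimension $r$; pushing forward $\O_Z$ and using that $H^r(Z,\O_Z)\ne 0$ on the projective variety $Z$ (from Serre duality, or Lemma~\ref{-Lnotqample} applied to $\O_Z(-1)$) one gets $H^r(X\setminus Y,i_*\O_Z)\ne 0$, contradicting $\cd(X\setminus Y)=r-1$. Hence $Y\cap Z\ne\emptyset$. Then ampleness of $\N_{Y|X}$ together with Lazarsfeld's positivity criterion \cite[Cor.~8.4.3]{Laz04} upgrades non-emptiness of the intersection to strict positivity $Y\cdot Z>0$; in particular every divisor is met.

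Items (iv) and (v) I would deduce from the same cohomological dimension bound through the standard formalism of Hartshorne's book \cite{Har70}. Writing down the long exact sequence of local cohomology
\[
\cdots\to H^{i-1}(X\setminus Y,\F)\to H^i_Y(X,\F)\to H^i(X,\F)\to H^i(X\setminus Y,\F)\to\cdots
\]
and combining it with the exact sequence relating $H^i(X,\F)$, $H^i(\hat X,\hat\F)$ and $H^{i+1}_Y(X,\F)$ (so that the comparison map $H^i(X,\F)\to H^i(\hat X,\hat\F)$ fails to be an iso/injection in degree $i$ precisely when $H^{i+1}_Y(X,\F)$ is non-trivial), the bound $H^j(X\setminus Y,\F)=0$ for $j\ge r$ translates into the desired iso/injection range $i<s$, $i=s$. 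This is the content of \cite[Ch.~III, Thm.~3.4]{Har70}. Part (v) is then either read off from (iv) applied to $\F=\O_X$ after passing to rational functions, or obtained directly from Hartshorne's criterion that $\cd(X\setminus Y)\le n-2$ implies the G3 property.

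The main obstacle is the bookkeeping in (iv): one must be careful that the two comparison sequences match up, and that the identification of $H^i_Y(X,\F)$ with the cone of $H^i(X,\F)\to H^i(\hat X,\hat\F)$ is valid for arbitrary coherent $\F$ (not just locally free); this is the only place where I would need to invoke a genuinely non-trivial piece of external machinery, rather than the results already established in this paper.
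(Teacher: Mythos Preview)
Your approach is exactly the paper's: (i) and (iii) are Corollaries~\ref{NYample} and~\ref{Lefschetz}, and (ii), (iv), (v) are deduced from the ample normal bundle together with $\cd(X\setminus Y)=r-1$ by citing Hartshorne's book \cite[III.3.4, IV.1.1, V.2.1 and p.~XI]{Har70}; you have simply unpacked those citations.

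One small slip in your argument for (ii): it is not true that $H^r(Z,\O_Z)\ne 0$ for an arbitrary $r$-dimensional projective variety (take $Z=\PP^r$). What you want, and what your parenthetical reference to Lemma~\ref{-Lnotqample} actually delivers, is that $H^r(Z,\O_Z(-m))\ne 0$ for $m\gg 0$; so push forward $i_*\O_Z(-m)$ rather than $i_*\O_Z$ to get the contradiction with $\cd(X\setminus Y)\le r-1$.
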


\begin{proof}
$i)$ and $iii)$ follow from Corollary \ref{NYample} and Corollary \ref{Lefschetz} respectively. The remaining statements are mainly consequences of $i)$ together with the condition $\cd(X\setminus Y)=\mbox{codim }Y-1$ and proofs can be found in Hartshorne's book (see \cite[III.3.4, IV.1.1. V.2.1 and p. XI]{Har70}.
\end{proof}

Note in particular that ample subvarieties have positive intersection with all other subvarieties of complementary dimension. Thus $i)+ii)$ can be seen as an analogue of one direction of the Nakai-Moishezon criterion for ample divisors. 

\begin{example} Consider a projective variety $Y$ of
dimension $\ge 1$ embedded as the zero-section in the total space $X=\PP(\E^* \oplus \O_Y)$ of a vector bundle $\E$. Then $Y$ is not ample in $X$ since it does not intersect the hyperplane at infinity.
\end{example}


\section{Further properties of ample subschemes}

\subsection{Ampleness in families}
It is well-known that ampleness of divisors is an open condition in algebraic families. Here we show an analogous statement for ample subschemes of higher codimension. 

\begin{theorem}
Let $f:X\to T$ be a flat, projective morphism and let $Y$ be a lci closed subscheme of $X$ such that $f\big |_Y: Y \to T$ is flat and assume that there is a point $t_0\in T$ such that $Y_{t_0}$ is ample in $X_{t_0}$. Then there is an open neighbourhood $U$ of $t_0$ such that for each $t\in U$, $Y_t$ is ample in $X_t$.
\end{theorem}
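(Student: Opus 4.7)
The plan is to reduce to the openness of $q$-ampleness in flat projective families, as proved by Totaro \cite{Tot10}. The key preparation is to interpret ampleness of $Y_t$ in $X_t$ as a $(r-1)$-ampleness statement for a single line bundle living on the total space of a relative blow-up over $T$, so that Totaro's result applies directly.

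First, form the relative blow-up $\pi\colon X' = \Bl_Y(X) \to X$ with exceptional Cartier divisor $E\subset X'$, and set $g = f\circ\pi\colon X'\to T$. Because $Y\hookrightarrow X$ is lci and both $Y$ and $X$ are flat over $T$, the ideal sheaf of $Y$ is locally generated by a regular sequence whose images in each fiber $\O_{X_t}$ remain a regular sequence (the standard Koszul argument using flatness of $Y$ over $T$). Hence $Y\hookrightarrow X$ is a regular embedding whose formation commutes with base change along any $T'\to T$. In particular, $g\colon X'\to T$ is flat and projective, and for every $t\in T$ there is a canonical identification $X'_t \iso \Bl_{Y_t}(X_t)$ under which $E_t$ is the exceptional divisor of the fiberwise blow-up.

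Next, let $r=\codim(Y_{t_0},X_{t_0})$. Flatness of $f$ and $f|_Y$ implies that the fiber dimensions are locally constant, so after shrinking $T$ around $t_0$ we may assume $\codim(Y_t,X_t)=r$ for every $t\in T$. The hypothesis that $Y_{t_0}$ is ample in $X_{t_0}$ then says exactly that the line bundle $\O_{X'_{t_0}}(E_{t_0})$ is $(r-1)$-ample on the fiber $X'_{t_0}$. Applying Totaro's openness theorem for $q$-ample line bundles in flat projective families \cite{Tot10} to the morphism $g\colon X'\to T$ and the line bundle $\O_{X'}(E)$, we obtain an open neighbourhood $U\subseteq T$ of $t_0$ such that $\O_{X'_t}(E_t)$ is $(r-1)$-ample on $X'_t$ for every $t\in U$. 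Under the identification $X'_t\iso \Bl_{Y_t}(X_t)$ this is precisely the statement that $Y_t$ is ample in $X_t$ for all $t\in U$.

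The only real obstacle is the base-change behaviour of the relative blow-up, namely that $g\colon X'\to T$ is flat with fibers $X'_t\iso \Bl_{Y_t}(X_t)$ and with $E$ restricting to the fiberwise exceptional divisor. This is a standard fact for regular embeddings in flat families, derived by working locally on $X$, writing $Y$ as the zero locus of a regular sequence, and checking that the Rees algebra and its relative $\Proj$ are compatible with pullback along $T$; flatness of $Y$ over $T$ is exactly what makes the Rees algebra flat. Once this compatibility and the locally constant codimension are in place, Totaro's openness result delivers the conclusion formally.
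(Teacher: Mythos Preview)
Your proof is correct and follows essentially the same strategy as the paper: establish that the relative blow-up $X'\to T$ is flat with fibers canonically identified with $\Bl_{Y_t}(X_t)$ and exceptional divisor restricting fiberwise, then invoke Totaro's openness of $q$-ampleness. The only cosmetic difference is that the paper verifies flatness of the Rees algebra by an induction on the exact sequences $0\to I_Y^{i+1}\to I_Y^i\to I_Y^i/I_Y^{i+1}\to 0$ (using that $I_Y^i/I_Y^{i+1}$ is locally free), whereas you phrase it via the standard fact that a regular embedding in a flat family remains regular fiberwise; these are equivalent justifications. Your explicit remark about shrinking $T$ to make the codimension constant is a point the paper leaves implicit.
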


\begin{proof}
Since $\O_X$ and $\O_Y$ is flat over $T$, so is the ideal sheaf $I_Y$. Using induction on $i$ and the exact sequence
$$
0 \to \I_Y^{i+1}\to \I_Y^i \to \I_Y^i/\I_Y^{i+1}\to 0
$$we see that in fact all the $\I_Y^i$ are flat as $\O_T$-modules (since $\I_Y^i/\I_Y^{i+1}$ is locally free, hence flat over $T$). This means that the Rees algebra of $Y$ is flat over $T$ and hence also the morphism $X'=\Bl_Y X\to T$ is flat. In particular, this implies that $X'|_{\pi^{-1}(Y_t)}=\Bl_{Y_t} X$ and $E\cap \Bl_{Y_t}X_t = E_t$. Now, by \cite{Tot10} $q$-ampleness is an open condition in characteristic zero, so there exists an open set $U\subset T$ containing $t_0$ such that $E_t$ is $(\codim Y-1)$-ample for all $t\in U$. 
\end{proof}

We note that ampleness is however not a \emph{closed} condition in codimension $\ge 2$. For example,in \cite[III.9]{Har77} there is an example of a flat family of smooth twisted cubics in $\mathbb{P}^3$ degenerating into a scheme corresponding to the ideal $(z^2,yz,xz,y^2w-x^2(x+w))$. The latter subscheme of $\PP^3$ is not ample (e.g. since the fiber $\pi^{-1}(y)$ over $y=[0,0,0,1]$ is 2-dimensional, contradicting Proposition \ref{equidim}).


\subsection{Asymptotic cohomology of powers of the ideal sheaf}
The following proposition says that ampleness of a subvariety is
equivalent to the vanishing of the asymptotic cohomology of
powers of the defining ideal.

\begin{proposition}\label{asymptoticideal}
Let $Y$ be a locally complete intersection subscheme of a smooth projective variety $X$. Then $Y$ is ample in $X$ if and only if for all line bundles $\L$ on $X$ and sufficiently large $m$
$$
H^i(X,\I_Y^m \otimes \L)=0
$$for $i\le \dim Y$.
\end{proposition}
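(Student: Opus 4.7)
The plan is to link the $(r-1)$-ampleness of the exceptional divisor $E$ on the blow-up $\pi\colon X'\to X$ with the desired vanishing by computing the higher direct images of $\O_{X'}(-mE)$. The key technical input is
$$R\pi_*\O_{X'}(-mE) = \I_Y^m \qquad (m \ge 0),$$
i.e.\ $\pi_*\O_{X'}(-mE) = \I_Y^m$ and $R^j\pi_*\O_{X'}(-mE) = 0$ for $j > 0$. I would establish this by induction on $m$ using the exact sequence
$$0 \to \O_{X'}(-(m{+}1)E) \to \O_{X'}(-mE) \to \O_E(m) \to 0$$
coming from the projective bundle structure $E = \PP(\N_{Y|X}^*)$ with $\O_{X'}(-E)|_E = \O_E(1)$; the direct images on $E$ recover the strata $\Sym^m N^* = \I_Y^m/\I_Y^{m+1}$ of the lci subscheme $Y$ (in characteristic 0), and the base case $R\pi_*\O_{X'}=\O_X$ is standard. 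Combined with the projection formula and the Leray spectral sequence, this yields
$$H^i\bigl(X', \pi^*\L \otimes \O_{X'}(-mE)\bigr) = H^i(X, \L \otimes \I_Y^m) \quad \text{for } m \ge 0 \text{ and any line bundle } \L \text{ on } X.$$

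For the forward direction, assume $Y$ is ample. Since $Y$ is lci in smooth $X$, the blow-up $X'$ is itself lci, hence Gorenstein, so Lemma \ref{smalli} applied to $L=\O_{X'}(E)$ gives: for every locally free sheaf $\E$ on $X'$ and every $0\le i\le n-r=\dim Y$, one has $H^i(X', \E \otimes \O_{X'}(-mE)) = 0$ for $m\gg 0$. Specializing to $\E = \pi^*\L$ and invoking the identification above produces the required vanishing $H^i(X, \L \otimes \I_Y^m)=0$.

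For the converse, fix an ample divisor $H$ on $X$ and an integer $a\gg 0$ so that $\O_{X'}(1):=\pi^*\O_X(aH)\otimes \O_{X'}(-E)$ is ample on $X'$. By Lemma \ref{oneO}, it suffices to show that for each $k\ge 0$,
$$H^i\bigl(X', \O_{X'}(mE)\otimes \O_{X'}(-k)\bigr)=0 \quad \text{for } i\ge r,\ m\gg 0.$$
Using the canonical bundle formula $\omega_{X'} = \pi^*\omega_X \otimes \O_{X'}((r-1)E)$ for lci blow-ups, Serre duality on the Gorenstein scheme $X'$ rewrites this as
$$H^j\bigl(X',\pi^*(\omega_X\otimes \O_X(akH)) \otimes \O_{X'}(-ME)\bigr)=0 \quad \text{for } j\le \dim Y,\ M\gg 0,$$
with $M=m+k-r+1\to\infty$. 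By the first paragraph this equals $H^j(X,\omega_X(akH)\otimes \I_Y^M)$, which vanishes by hypothesis applied to the line bundle $\L=\omega_X\otimes \O_X(akH)$.

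The chief obstacle is verifying the identification $R\pi_*\O_{X'}(-mE)=\I_Y^m$ in the lci setting, which is the crux of the translation; the rest amounts to bookkeeping with Lemmas \ref{oneO} and \ref{smalli}, Serre duality, and the canonical bundle formula. A minor subtlety is choosing the ample line bundle on $X'$ in the specific form $\pi^*(\text{ample})\otimes \O(-E)$ so that Lemma \ref{oneO} interacts cleanly with the direct-image computation, allowing the check of $(r-1)$-ampleness to be reduced to sheaves of the form $\pi^*\L$.
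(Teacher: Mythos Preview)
Your proposal is correct and follows essentially the same approach as the paper: both reduce the question, via Lemmas \ref{oneO} and \ref{smalli} on the Gorenstein blow-up $X'$, to testing the dual vanishing $H^i(X',\pi^*\L\otimes\O_{X'}(-mE))=0$ for line bundles pulled back from $X$, and then identify this with $H^i(X,\L\otimes\I_Y^m)$ via the Leray spectral sequence. The only cosmetic differences are that the paper obtains $R^j\pi_*\O_{X'}(-mE)=0$ and $\pi_*\O_{X'}(-mE)=\I_Y^m$ for $m\gg 0$ directly from the $\pi$-ampleness of $-E$ (rather than your induction, which in fact gives it for all $m\ge 0$), and that the paper packages your explicit Serre-duality-plus-canonical-bundle computation into the single observation that an ample line bundle on $X'$ can be taken of the form $\pi^*L(-E)$.
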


\begin{proof}
  Let $X'$ be the blow-up of $X$ with center $Y$ and let $E$ be the
  exceptional divisor. Since $X'$ is lci, hence Gorenstein, we see from Lemmas
  \ref{oneO} and \ref{smalli} that $Y$ is ample if and only if
  for each line bundle $\G$ on $X'$, $H^i(X',\G \otimes \O(-mE))=0$ for
  $m$ large and $i \le n-\codim Y=\dim Y$. In fact, since 
$\pi^*\L(-E)$ is ample for some sufficiently positive line bundle $L$ on $X$, we see that it suffices to consider sheaves $\G$ of the form $\pi^*L$. 
Since $-E$ is $\pi$-ample, we have for $m$ large,
  $\pi_*\O(-mE)= \I_Y^m$, $R^i\pi_*\O(-mE)=0$ for $i>0$, and hence by the 
  Leray spectral sequence, 
$$
H^i(X',\pi^*L \otimes\O(-mE)) \cong H^i(X,L \otimes \I_Y^m)
$$and the conclusion follows.
\end{proof}


\subsection{Intersections}
The next proposition shows that the intersection of two ample subschemes is again ample, provided that it has the expected codimension.

\begin{proposition}
Let $X$ be a smooth projective variety and let $Y_1$ and $Y_2$ be two locally complete intersection ample subschemes of codimensions $d$ and $e$ respectively. If the intersection $Z=Y_1\cap Y_2$ has codimension $r+s$, then it is ample in $X$. 
\end{proposition}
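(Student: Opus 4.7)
The plan is to deduce ampleness of $Z=Y_1\cap Y_2$ from the cohomological characterisation in Theorem \ref{amplecd}. Concretely, I would verify that $Z$ is lci of codimension $d+e$ (the statement's ``$r+s$'' being a typo for $d+e$), that its normal bundle $N_{Z|X}$ is ample, and that $\cd(X\setminus Z)=d+e-1$; Theorem \ref{amplecd} then delivers the conclusion immediately.

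For the first two points I would argue locally. Since $X$ is smooth and each $Y_i$ is lci, their ideal sheaves are locally generated by regular sequences of lengths $d$ and $e$; the codimension hypothesis forces the concatenated sequence to remain regular by Krull's height theorem, so $Z$ is lci of codimension $d+e$. The proper intersection then yields the standard short exact sequence of normal bundles
\[
0 \to N_{Y_2|X}\big|_Z \to N_{Z|X} \to N_{Y_1|X}\big|_Z \to 0.
\]
By Corollary \ref{NYample} each $N_{Y_i|X}$ is an ample vector bundle, and its restriction to the closed subscheme $Z$ remains ample. In characteristic zero, an extension of ample vector bundles is ample (a theorem of Hartshorne), so $N_{Z|X}$ is ample.

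For the cohomological dimension, the lower bound $\cd(X\setminus Z)\ge \codim Z-1 = d+e-1$ is the general inequality from \cite[p.~99]{Har70}. For the matching upper bound, writing $U_i=X\setminus Y_i$ so that $X\setminus Z=U_1\cup U_2$, I would invoke Hartshorne's subadditivity of cohomological dimension
\[
\cd\bigl(X\setminus(Y_1\cap Y_2)\bigr) \le \cd(X\setminus Y_1)+\cd(X\setminus Y_2)+1,
\]
which is proved through a Mayer--Vietoris argument for local cohomology with supports in $Y_1$, $Y_2$, $Y_1\cup Y_2$ and $Y_1\cap Y_2$. Applying Theorem \ref{amplecd} to the already ample $Y_i$ yields $\cd(U_i)=d-1$ and $e-1$, and the subadditivity bound collapses to $\cd(X\setminus Z)\le d+e-1$, matching the lower bound.

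With both hypotheses of Theorem \ref{amplecd} verified, that theorem delivers ampleness of $Z$ in $X$. The main obstacle is the upper bound on $\cd(X\setminus Z)$: a naive Mayer--Vietoris applied to the Zariski open cover $\{U_1,U_2\}$ of $X\setminus Z$ does not directly control the cohomology of the intersection $U_1\cap U_2=X\setminus(Y_1\cup Y_2)$, so one must instead go through local cohomology with supports, where the Mayer--Vietoris sequence behaves well and the subadditivity is classical.
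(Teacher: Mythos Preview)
Your proposal is correct and follows the same strategy as the paper: reduce to Theorem \ref{amplecd} by checking that $Z$ is lci, that $N_{Z|X}$ is ample via the normal-bundle sequence, and that $\cd(X\setminus Z)\le d+e-1$.

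The only point of divergence is how you obtain the upper bound on $\cd(X\setminus Z)$. You invoke the subadditivity inequality as a black box proved through local cohomology with supports, explicitly dismissing the ``naive'' Mayer--Vietoris on the open cover $\{U_1,U_2\}$ as insufficient because it does not control $\cd(U_1\cap U_2)$. But the paper uses precisely that naive Mayer--Vietoris, controlling the missing term by the observation that $U_1\cap U_2$ embeds as a closed subscheme (the diagonal) of $U_1\times U_2$, whence
\[
\cd(U_1\cap U_2)\le \cd(U_1\times U_2)=(d-1)+(e-1).
\]
So the obstacle you flag is not actually there; the direct argument is shorter than the local-cohomology detour you propose, and is in fact the standard way to prove the subadditivity inequality you cite.
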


\begin{proof}
 Note that $Z$ is again locally complete intersection. By Theorem \ref{amplecd} we need only verify that i) $N_{Z|X}$ is an ample vector bundle and ii) $\cd(X\setminus Z)\le r+s-1$. The first part is immediate since $N_{Z|X}$ fits into the exact sequence
$$
0 \to N_{Z|Y_1} \to  N_{Z|X} \to  N_{Y_1|X}\big|_Z \to 0
$$where both $N_{Z|Y_1}=N_{Y_2|X}\big|_Z$ and $N_{Y_1|X}\big|_Z$ are ample vector bundles \cite[p. 84]{Har70}. Next, we compute the cohomological dimension of $U=X-Z=U_1\cup U_2$, where $U_i=X\setminus Y_i$ $i=1,2$. By hypothesis we have $\cd(U_1)=r-1$ and $\cd(U_2)=s-1$. Note first that $\cd(U_1\cap U_2) \le \cd(U_1\times U_2)=r+s-2$, since $U_1\cap U_2$ embeds in the diagonal of $U_1\times U_2$. Now the Mayer-Vietoris sequence shows that $H^i(U, \F)=0$ for $i>r+s-1$ for any coherent sheaf $\F$
on $X$. Hence $\cd(U)=r+s-1$ and $Y_1\cap Y_2$ is an ample subscheme of $X$.\end{proof}

\subsection{Transitivity}
A natural way of constructing subschemes with certain positivity properties is by taking a flag $Y_r\subset Y_{r-1} \subset \cdots \subset Y_0=X$ where each $Y_i$ is an ample divisor (or more generally, subscheme) of $Y_{i-1}$. The next result shows that this process does indeed produce an ample subscheme.
%
%
%
%



 \begin{proposition}\label{transitivity}
 Let $Z\subset Y$ be locally complete intersection subschemes of a smooth projective variety $X$ with $Z$ a locally complete intersection in $Y$. 
 If $Z$ is ample in $Y$ and $Y$ ample in $X$, then $Z$ is ample in $X$.


 \end{proposition}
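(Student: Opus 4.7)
The plan is to apply Theorem~\ref{amplecd} to the codimension $r+s$ subscheme $Z \subset X$, where $r = \codim_X Y$ and $s = \codim_Y Z$. That reduces the problem to verifying two independent conditions: (a) the normal bundle $N_{Z|X}$ is an ample vector bundle on $Z$, and (b) the cohomological dimension satisfies $\cd(X \setminus Z) = r+s-1$.

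For (a), I would use the standard conormal exact sequence associated to $Z \subset Y \subset X$, namely
\[
0 \to N_{Z|Y} \to N_{Z|X} \to N_{Y|X}\big|_Z \to 0.
\]
Applying Corollary~\ref{NYample} to the pair $Z \subset Y$ shows that $N_{Z|Y}$ is ample, and applying it to $Y \subset X$ shows that $N_{Y|X}$ is ample, hence so is its restriction $N_{Y|X}|_Z$ (restrictions of ample bundles are ample). The conclusion that $N_{Z|X}$ itself is ample then follows from the standard fact that an extension of ample vector bundles is ample (see e.g.\ \cite[III.2]{Har70} or Lazarsfeld, Positivity~II, 6.1.15).

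For (b), the inequality $\cd(X \setminus Z) \geq r+s-1$ is automatic from \cite[p.~99]{Har70}. For the reverse inequality, the key point is the classical bound
\[
\cd(X \setminus Z) \;\leq\; \cd(X \setminus Y) \;+\; \cd(Y \setminus Z) \;+\; 1,
\]
which in our situation gives $(r-1) + (s-1) + 1 = r+s-1$ once we use Theorem~\ref{amplecd} for $Y \subset X$ and for $Z \subset Y$. To prove this bound I would use the long exact sequence of local cohomology with support in the closed subscheme $Y \setminus Z$ of $X \setminus Z$,
\[
\cdots \to H^i_{Y \setminus Z}(X \setminus Z, \F) \to H^i(X \setminus Z, \F) \to H^i(X \setminus Y, \F) \to \cdots,
\]
and bound the local cohomology by the Grothendieck spectral sequence $H^p(Y \setminus Z, \sheafHom^q_{Y \setminus Z}\F) \Rightarrow H^{p+q}_{Y \setminus Z}(X \setminus Z, \F)$, using that $Y$ is lci of codimension $r$ in the smooth scheme $X$ (so the local cohomology sheaves $\sheafHom^q_{Y \setminus Z}\F$ vanish for $q < r$, by Koszul computation) together with $\cd(Y \setminus Z) = s-1$. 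The main obstacle is this last step: one must be careful that the local cohomology sheaves are sufficiently well-behaved as quasi-coherent sheaves on $Y \setminus Z$ so that the cohomological-dimension bound there applies. Once the bound in (b) is established, Theorem~\ref{amplecd} immediately yields that $Z$ is ample in $X$.
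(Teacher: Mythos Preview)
Your approach is exactly the paper's: reduce via Theorem~\ref{amplecd} to the normal-bundle statement (handled by the same exact sequence you wrote) and to the cohomological-dimension bound, then attack the latter with the local-cohomology long exact sequence for the closed pair $(Y\setminus Z)\subset (X\setminus Z)$ together with the spectral sequence
\[
E_2^{pq}=H^p\bigl(X\setminus Z,\;\H^q_{Y\setminus Z}(\F)\bigr)\ \Longrightarrow\ H^{p+q}_{Y\setminus Z}(X\setminus Z,\F).
\]

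There is, however, a slip in the key vanishing input. You wrote that the local cohomology sheaves $\H^q_{Y\setminus Z}(\F)$ vanish for $q<r$ ``by Koszul computation''. For an arbitrary coherent $\F$ this is false (think of $\F$ with torsion supported on $Y$), and in any case vanishing for small $q$ does not bound the abutment from above. What you actually need, and what the paper uses (citing Ogus), is the opposite inequality: since $Y\setminus Z$ is lci of codimension $r$ in the smooth open $X\setminus Z$, one has $\H^q_{Y\setminus Z}(\F)=0$ for all $q>r$. Combined with $\cd(Y\setminus Z)=s-1$, this forces $E_2^{pq}=0$ whenever $p+q\ge r+s$, hence $H^i_{Y\setminus Z}(X\setminus Z,\F)=0$ for $i\ge r+s$, and the long exact sequence gives $\cd(X\setminus Z)\le r+s-1$.

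As for the ``main obstacle'' you flagged: the paper handles it exactly as you anticipated. One writes $\H^q_{Y\setminus Z}(\F)=\varinjlim_n \sheafExt^q(\O/I_Y^n,\F)$, a direct limit of \emph{coherent} sheaves supported on $Y\setminus Z$; since cohomology commutes with direct limits on noetherian spaces, the bound $\cd(Y\setminus Z)=s-1$ applies termwise and passes to the limit.
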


 \begin{proof}
   Again, to show that $Y$ is ample, it suffices to show that the conditions in Theorem \ref{amplecd} are satisfied. First of all, the exact sequence
$$
0 \to N_{Z|Y} \to N_{Z|X} \to  N_{Y|X}\big |_Z \to 0
$$shows that the normal bundle of $Z$ in $X$ is an ample vector bundle. Let $d=\codim(Y,X)$ and $e=\codim(Z,Y)$. To complete the proof it suffices to check that the
complement $U=X-Z$ has cohomological dimension equal to $d+e-1$. Let $V=X\setminus Y$ and consider the local cohomology sequence
$$
\cdots \to H_{Y \cap U}^i(U,\F) \to H^i(U,\F) \to H^i(V,\F) \to \cdots.
$$Since $H^i(V,\F)=0$ for $i>d$ it suffices to show that $H_{Y \cap U}^i(U,\F)=0$ for all $i\ge d+e$. Now, $Y\cap U$ is locally complete intersection in
$U$ and so by \cite[\S 0]{Og}, the local cohomology sheaves $\H_{Y\cap U}^q(\F)$ vanish for all $q>d$. Also, recall that the local cohomology sheaves $\H_{Y\cap U}^q(\F)$ can be written as the direct limit of coherent sheaves of the form $\sheafExt^k(\O_U/I_Y^n,\F)$ which are all supported on the subscheme $Y\cap U=Y-Z$ which has cohomological dimension $e-1$. Since cohomology commutes with taking direct limits on a noetherian topological space \cite[Proposition III.2.9]{Har77}, we see that the terms in the spectral sequence of local cohomology 
$$
E_2^{pq}=H^p(U,\H_{Y\cap U}^q(\F))\Rightarrow H_{Y\cap U}^{p+q}(U,\F)
$$are zero in all degrees $p+q\ge d+e$ . Thus $U$ has cohomological dimension $d+e-1$ and the proof is complete.

 \end{proof}

\subsection{Pullbacks by finite morphisms}

\begin{proposition}\label{basechange}
  Let $f:X'\to X $ be a finite, flat morphism of projective varieties and let $Y$ be a subscheme of $X$. Then $Y'=f^{-1}(Y)$ is ample in $X'$ if and only if $Y$ is ample in $X$.
\end{proposition}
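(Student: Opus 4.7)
The plan is to reduce this to the base change statement for $q$-ample line bundles (Proposition \ref{basechange}(iii) of Section \ref{prelim}) applied to the induced morphism on blow-ups.

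First I would record two general facts about $f$. Because $f$ is finite and flat between projective varieties, all fibers are zero-dimensional and $f$ is equidimensional, so $\dim X' = \dim X$; as $f(X')$ is closed and of maximal dimension, $f$ is automatically surjective. Flatness also preserves codimension, so if $r = \codim_X(Y)$ then $\codim_{X'}(Y') = r$.

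Next I would invoke the standard fact that blowing up commutes with flat base change (cf.\ the Rees algebra description and the discussion in \cite[IV]{Har70}). Writing $\pi:\tilde X = \Bl_Y X \to X$ with exceptional divisor $E$, this gives a canonical isomorphism
\[
\tilde X \times_X X' \;\simeq\; \Bl_{Y'} X' \;=:\; \tilde X',
\]
under which the second projection is the blow-up morphism $\pi' : \tilde X' \to X'$, and the first projection $\tilde f : \tilde X' \to \tilde X$ satisfies $\tilde f^{*}\O_{\tilde X}(E) = \O_{\tilde X'}(E')$, where $E'$ is the exceptional divisor of $\pi'$. Being a base change of $f$, the morphism $\tilde f$ is again finite and surjective.

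With these ingredients in hand, the proposition is immediate from the earlier base change result for $q$-ample line bundles: $\O_{\tilde X}(E)$ is $(r-1)$-ample on $\tilde X$ if and only if its pullback $\tilde f^{*}\O_{\tilde X}(E) = \O_{\tilde X'}(E')$ is $(r-1)$-ample on $\tilde X'$. By the definition of ampleness for subschemes this translates exactly into the equivalence between $Y$ being ample in $X$ and $Y'$ being ample in $X'$. The only nontrivial input is the compatibility of blowing up with flat base change and the matching of exceptional divisors under it; once this is cited, the argument reduces to a single application of Proposition \ref{basechange}(iii).
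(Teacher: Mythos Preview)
Your proof is correct and follows essentially the same route as the paper: both arguments use that blowing up commutes with flat base change (via the Rees algebra), identify $\tilde f^{*}\O(E)$ with $\O(E')$, observe that the induced map $\tilde f$ on blow-ups is finite and surjective, and then appeal to Proposition~\ref{basechange}(iii) from Section~\ref{prelim}. Your version is in fact slightly more careful in explicitly noting that $\codim_{X'}(Y')=\codim_X(Y)$, which is needed to ensure the same $q=r-1$ is used on both sides.
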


\begin{proof}
This is basically just a formal consequence of the functoriality of blowing up. Note that the ideal sheaf of $Y'$ in $X'$ is given by $f^*I_Y$. There is a canonical surjection of Rees algebras $\bigoplus f^* \I_Y^m\to \bigoplus \I_Y^m$ and which gives a closed embedding $\Bl_{Y'}X' \hookrightarrow  \Bl_{Y}X \times_X X'=\Proj(\bigoplus f^* \I_Y^m).$ This is actually an isomorphism, since $f$ is finite and flat.  Since finite maps are stable under flat base-change, the induced map $\tilde{f}:\Bl_{Y'}X' \to  \Bl_{Y}X$ is finite. Moreover $\tilde{f}$ is surjective because $\dim \Bl_{Y'}X' =\dim \Bl_{Y}X$. Now the exceptional divisor $E'=\tilde{f}^{*}E$ is $(r-1)$-ample if and only if $E$ is $(r-1)$-ample, by Proposition \ref{basechange}, where $r=\codim Y$.
\end{proof}

In particular, note that the proposition applies to any finite surjective morphism of smooth varieties of the same dimension.

\subsection{The fundamental group of an ample subvariety}
From Section 4 we know that the cohomology groups $H^i(Y,\QQ)$ and $H^i(X,\QQ)$
are closely related if $Y$ is an ample subvariety of $X$. In Section 6 we will give examples to
show that the corresponding isomorphisms can fail for integral cohomology and for fundamental groups. 
On the other hand, we do have the following result:

\begin{theorem}
  Let $X$ be a smooth projective variety over $\CC$ and let $Y$ be a smooth ample subvariety of $X$ of dimension $\ge 1$. Then the inclusion induces a surjection of fundamental groups
$$
\pi_1(Y) \to  \pi_1(X) \to 0
$$
\end{theorem}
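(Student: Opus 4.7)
The plan is to reduce, via the blow-up construction, to a Lefschetz-type surjection on $\pi_1$ for a $q$-ample divisor. Let $\pi\colon X' \to X$ be the blow-up of $X$ along $Y$, with exceptional divisor $E \cong \PP(\N_{Y|X}^*)$. Since $Y$ is ample in $X$, the divisor $E$ is $(r-1)$-ample on $X'$, where $r = \codim_X Y$. Two classical facts allow me to reduce the problem:
\begin{itemize}
\item[(a)] Since $\pi$ is the blow-up of a smooth variety along a smooth center, it induces an isomorphism $\pi_1(X') \xrightarrow{\sim} \pi_1(X)$; this follows from a Seifert--van Kampen analysis applied to a tubular neighborhood of $E$ together with its complement $X' \setminus E \cong X \setminus Y$, using that the fibers of $\pi$ are projective spaces, hence simply connected.
\item[(b)] The projection $E \to Y$ is a Zariski-locally trivial $\PP^{r-1}$-bundle; since its fibers are simply connected, the homotopy long exact sequence yields $\pi_1(E) \xrightarrow{\sim} \pi_1(Y)$.
\end{itemize}
Through these identifications, the theorem becomes equivalent to surjectivity of $\pi_1(E) \to \pi_1(X')$.

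I would establish this last surjection via the following general principle:

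\emph{Claim.} If $D$ is a smooth $q$-ample divisor on a smooth complex projective variety $Z$ of dimension $n$ with $n - q \ge 2$, then $\pi_1(D) \to \pi_1(Z)$ is surjective.

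Applied to $D = E \subset Z = X'$ with $q = r-1$ and $n = \dim X$, the numerical condition $n - q \ge 2$ becomes $\dim Y \ge 1$, exactly as assumed. To prove the claim, I would use Morse theory on the complement $Z \setminus D$. The crucial topological input is that $Z \setminus D$ has the real homotopy type of a CW complex of real dimension at most $n + q$, extending the classical Andreotti--Frankel theorem from the affine case ($q = 0$). Granted this, Poincar\'e--Lefschetz duality gives $H_i(Z, D; \ZZ) = H^{2n-i}(Z \setminus D; \ZZ) = 0$ for $i < n - q$, and a dual handle-decomposition argument then shows that $Z$ is obtained up to homotopy from $D$ by attaching cells of real dimension $\ge n - q$. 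Under the hypothesis $n - q \ge 2$, no $1$-cells need be attached, so the inclusion $D \hookrightarrow Z$ induces a surjection on $\pi_1$.

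The main obstacle is the topological dimension bound for $Z \setminus D$: one must upgrade the cohomological $q$-ampleness of $D$ into a plurisubharmonic (or $q$-convex) exhaustion function on $Z \setminus D$ whose critical points have Morse index at most $n + q$. For $q = 0$ this is classical (Andreotti--Frankel via Stein exhaustions); in the general $q$-ample case it is a Karčjauskas/Hamm-type result requiring an Andreotti--Grauert-style $q$-convex analysis of the complement --- a passage whose subtleties are in fact underscored by the counterexample constructed in Section \ref{qpositive}.
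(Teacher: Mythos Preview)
Your reductions (a) and (b) are correct, and the Claim you formulate (surjectivity of $\pi_1(D)\to\pi_1(Z)$ for a smooth $q$-ample divisor with $n-q\ge 2$) is in fact true. However, the Morse-theoretic proof you sketch for it has a genuine, irreparable gap. The step ``$Z$ is obtained from $D$ by attaching cells of real dimension $\ge n-q$'' is \emph{false} for general $q$-ample divisors, and the counterexample you yourself cite from Section~\ref{qpositive} demonstrates this directly: there $E\subset X'$ is a smooth $1$-ample divisor in a fourfold, so $n-q=3$, yet $\pi_1(E)=\ZZ/2$ while $\pi_1(X')=0$. Attaching only cells of dimension $\ge 3$ leaves $\pi_1$ unchanged, so no such cell structure exists. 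The obstacle you flag in your last paragraph is therefore not a technical subtlety to be overcome by invoking Kar\v{c}jauskas--Hamm type results: the desired Morse function (equivalently, the $q$-convex exhaustion) need not exist, because $q$-ampleness is strictly weaker than $q$-positivity. The paper's Lefschetz results for $q$-ample divisors (Corollary~\ref{Lefschetz}) hold only with $\QQ$-coefficients precisely for this reason.

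The paper's proof avoids Morse theory entirely and does not pass through the blow-up. It uses two ingredients: first, since $N_{Y|X}$ is ample, a theorem of Napier--Ramachandran guarantees that the image of $\pi_1(Y)$ in $\pi_1(X)$ has finite index; second, taking the finite \'etale cover $F:X'\to X$ corresponding to that image, the preimage $Y'=F^{-1}(Y)$ is again ample in $X'$ by Proposition~\ref{basechange}, hence connected. A standard covering-space lifting argument then forces the cover to have degree one. This argument leverages only the $\QQ$-cohomological consequences of ampleness (connectedness of ample subvarieties) together with the ample-normal-bundle input, sidestepping the integral/homotopy Lefschetz statements that fail in this setting.
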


\begin{proof}
Since $Y$ is ample, the normal bundle of $Y$ is
ample, so at least the image of $\pi_1(Y)$ is of finite
index in $\pi_1(X)$ by results of \cite{NR98}. Take a finite cover $F:X'\to X$
corresponding to this inclusion of subgroups. Consider the preimage $Y'=F^{-1}(Y)$ in $X'$. $Y'$ is ample in $X'$ by Proposition \ref{basechange}, in particular it is connected and so by restricting $F$, we get an induced covering $f:Y'\to Y$. By the construction of the cover $F$ and the general lifting lemma, there 
exists a lifting $l:Y\to Y'$ which is a section of $f$, and so in particular $f_* : \pi(Y')\to \pi(Y)$ must be a surjection. In particular, the map $f$ must be one-to-one and hence so must $F$. This shows that the index of $i_*\pi_1(Y)\subset \pi_1(X)$ was in fact 1, and hence $\pi_1(Y) \to  \pi_1(X)$ is surjective. 
\end{proof}

\begin{remark}For $Y$ ample in $X$ the map $\pi_1(Y)\to \pi_1(X)$ may even have infinite kernel, because there exist smooth projective varieties $Y$ such that $\pi_1(Y)$ is infinite, but has finite abelianization. An explicit example is given by a so-called 'fake projective plane' which is a complex projective surface of general type with the same Betti numbers as $\PP^2$. Such a $Y$ embeds as an ample subvariety of some projective space $X=\PP^n$ by Theorem \ref{Pn-ample} below. On the other hand, it is known that the fundamental group is isomorphic to a torsion free lattice in $PU(2,1)$, so in particular it is infinite (see \cite{Fake}).\end{remark}

\subsection{Integral closure}\label{alg}
Let $Y\subset X$ be a subscheme and let $I\subset \O_X$ be its corresponding sheaf of ideals. When $Y$ is reduced and $X$ is smooth, then ampleness of $Y$ implies that $Y$ has complete intersection singularities. This is of course not true without the reducedness assumption since $I$ defines an ample subscheme if and only if $I^k$ does for $k>0$. In this section we see what effect varying the scheme structure has on ampleness. In particular, we present an example of a non-Cohen Macaulay ample subscheme.

We recall some definitions about ideal sheaves. The integral closure of an ideal sheaf $I\subset \O_X$ is defined as the ideal $\overline{I}=\mu_*\O_{\overline{X}}(-E)$ where $\mu:\overline{X}\to X$ is the normalization of the blow-up of $X$ along $I$. Equivalently, $\overline{I}$ is the ideal consisting of all elements $r\in \O_X$ satisfying some integral equation $r^n+a_1r^{n-1}+\ldots+a_n=0$ with $a_i\in I^i$ \cite[9.6.A]{Laz04}. An ideal sheaf $J\subset I$ is called a reduction of $I$ if $\overline{J}=\overline{I}$. 

\begin{proposition}
Let $Y$ be a subscheme of a projective variety $X$ of codimension $r$ and let $\bar{Y}$ be its integral closure. Then $Y$ is ample in $X$ if and only if $\bar{Y}$ is.
In particular, a subscheme $Y$ is ample if and only if the subscheme $Y'$ associated to a reduction is.
\end{proposition}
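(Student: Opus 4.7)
The plan is to exhibit a common finite surjective cover of $X' = \Bl_Y X$ and $\bar X' = \Bl_{\bar Y} X$ and then apply the base-change property of $q$-ampleness under finite surjective morphisms (Proposition~\ref{basechange}(iii)). First I observe that $\codim Y = \codim \bar Y$: any $r \in \bar I$ satisfies an integral relation of degree $n$ over $I$, so $r^n \in I$, hence $\sqrt{I} = \sqrt{\bar I}$ and $Y$, $\bar Y$ share the same support.

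Let $\pi: X' \to X$ and $\bar\pi: \bar X' \to X$ denote the two blow-ups, with exceptional divisors $E$ and $\bar E$, and let $\nu: \overline X \to X'$ be the normalization of $X'$. With $\mu = \pi\circ\nu$ and $\tilde E = \nu^* E$, the definition of integral closure reads $\bar I = \mu_*\O_{\overline X}(-\tilde E)$. The key step is to produce a finite surjective morphism $\bar\nu: \overline X \to \bar X'$ over $X$ satisfying $\bar\nu^* \O_{\bar X'}(-\bar E) = \O_{\overline X}(-\tilde E)$. From the chain of inclusions $I \subseteq \bar I \subseteq \mu_*\O_{\overline X}(-\tilde E)$, adjunction gives
\[
\O_{\overline X}(-\tilde E) \;=\; I\cdot\O_{\overline X} \;\subseteq\; \bar I\cdot\O_{\overline X} \;\subseteq\; \O_{\overline X}(-\tilde E),
\]
so $\bar I\cdot\O_{\overline X}$ is invertible. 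The universal property of blowing up then produces the required $\bar\nu$, and since $\overline X$ is normal and projective over $X$, birational to $\bar X'$ (both are isomorphisms over $X\setminus \bar Y$), Zariski's main theorem identifies $\bar\nu$ with the normalization of $\bar X'$; in particular $\bar\nu$ is finite and surjective.

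Two applications of Proposition~\ref{basechange}(iii), to the finite surjective maps $\nu$ and $\bar\nu$, then yield
\[
\O_{X'}(E) \text{ is } (r-1)\text{-ample} \iff \O_{\overline X}(\tilde E) \text{ is } (r-1)\text{-ample} \iff \O_{\bar X'}(\bar E) \text{ is } (r-1)\text{-ample},
\]
which is precisely the equivalence $Y$ ample in $X$ iff $\bar Y$ ample in $X$. The reduction statement follows immediately, since a subscheme $Y'$ associated to a reduction $J$ of $I_Y$ satisfies $\overline{Y'} = \bar Y$ by definition, and is therefore ample iff $\bar Y$ is iff $Y$ is.

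The main obstacle will be the identification of $\overline X$ with the normalization of $\bar X'$: constructing $\bar\nu$ from the universal property of blow-ups is straightforward, but its finiteness---needed in order to apply the base-change result---relies on the normality of $\overline X$ combined with Zariski's main theorem. Once that is secured, the rest of the argument is essentially formal.
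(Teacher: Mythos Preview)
Your argument is correct and follows the same strategy as the paper: both blow-ups share a common normalization $\overline X$, the two exceptional divisors pull back to the same Cartier divisor there, and two applications of Proposition~\ref{basechange}(iii) finish the job. The paper simply asserts that the normalizations coincide; you supply the construction via the universal property, which is fine and in fact more informative.

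One small point of care: invoking Zariski's main theorem to deduce that $\bar\nu$ is finite is not quite direct, since the standard forms of ZMT require either a normal \emph{target} or prior knowledge of quasi-finiteness, and here only the source $\overline X$ is known to be normal. The cleanest fix is already implicit in what you wrote: you have shown $\bar\nu^*\O_{\bar X'}(-\bar E)=\O_{\overline X}(-\tilde E)$, and $-\bar E$ is $\bar\pi$-ample while $-\tilde E$ is $\mu$-ample (being the pullback of the $\pi$-ample $-E$ under the finite map $\nu$). Restricting to any fibre of $\bar\nu$, the line bundle is simultaneously trivial and ample, so the fibre is zero-dimensional; hence $\bar\nu$ is proper and quasi-finite, therefore finite. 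Alternatively, one can bypass the universal-property construction entirely and argue on Rees algebras: since $\overline{I^n}=\overline{\bar I^{\,n}}$ for all $n$, the graded normalizations $\overline{R(I)}$ and $\overline{R(\bar I)}$ coincide, so their $\Proj$'s give the common normalization directly.
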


\begin{proof}
The blow-ups of $Y$ and $\bar{Y}$ have the same normalization $\overline{X}$ and the exceptional divisors $E$ and ${\bar
E}$ of $\pi $ and ${  \bar{\pi}}$ pull back to the same divisor $F$ on $\overline{X}$. Since the normalization map is finite surjective we conclude from Proposition \ref{basechange} that $E$ is $(r-1)$-ample if and only if ${\bar E}$ is.

The last part is clear since $Y$ and $Y'$ are ample in $X$ if and only their integral closure $\overline{Y}$ is.\end{proof}






\begin{example} The following is an example of a non-Cohen Macaulay ample subscheme. Let $Y\subset \mathbb{P}^3$ be the subscheme associated to the ideal $I=(x_0^2,x_0x_1x_2,x_1^2)$. Then $Y$ is ample, since its integral closure equals $(x_0,x_1)^2$, and $(x_0,x_1)$ is ample in $\mathbb{P}^3$. On the other hand, $Y$ has an embedded point at $[0,0,0,1]$, so in particular it is not Cohen-Macaulay. 
\end{example}

\section{Ample subschemes of projective space}\label{Pn}
The condition $\cd(X\setminus Y)=\codim(Y)-1$ is well understood in the case where $Y$ is a closed subscheme of complex projective space. This allows us to determine the ample subschemes of $\PP^n$ in terms of their topology:

\begin{theorem}\label{Pn-ample} A smooth complex subscheme $Y\subset \PP^n$ is ample if and only if the maps
  \begin{equation}\label{Lmaps}
    H^i(\PP^n,\QQ)\to H^i(Y,\QQ) 
  \end{equation}
are isomorphisms for $0 \le i<\dim Y$. \end{theorem}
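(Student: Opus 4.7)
The plan is to reduce the problem to Theorem \ref{amplecd}, which says that for a smooth subvariety $Y$ of a smooth projective variety $X$, ampleness of $Y$ is equivalent to the conjunction of (a) $\N_{Y|X}$ being an ample vector bundle and (b) $\cd(X\setminus Y) = \codim Y - 1$. The forward direction of the theorem is immediate: if $Y$ is ample in $\PP^n$, Corollary \ref{Lefschetz} (which only requires $Y$ to be an ample lci subscheme) gives the required isomorphisms $H^i(\PP^n,\QQ)\to H^i(Y,\QQ)$ for $i<\dim Y$.

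For the converse, assume the Lefschetz maps are isomorphisms in the stated range, and verify (a) and (b) of Theorem \ref{amplecd} for $X=\PP^n$. Condition (a) actually holds for any smooth subvariety of $\PP^n$ and is independent of the Lefschetz hypothesis: the Euler sequence
$$
0\to \O_{\PP^n}\to \O_{\PP^n}(1)^{\oplus(n+1)}\to T_{\PP^n}\to 0
$$
exhibits $T_{\PP^n}$ as a quotient of an ample vector bundle, hence ample; restricting to $Y$ and quotienting by $T_Y$ via $0\to T_Y\to T_{\PP^n}|_Y\to \N_{Y|\PP^n}\to 0$ presents $\N_{Y|\PP^n}$ as a further quotient, which is therefore ample by Hartshorne.

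The substantive step is establishing (b), the equality $\cd(\PP^n\setminus Y)=\codim Y-1$. The inequality $\cd(\PP^n\setminus Y)\geq \codim Y-1$ is standard (see \cite[p.~99]{Har70}), so what is needed is the upper bound $\cd(\PP^n\setminus Y)\leq n-\dim Y -1$. This is precisely the content of a classical theorem of Ogus (cf.\ Hartshorne \cite{Har70}), which for smooth subvarieties of complex projective space characterizes the cohomological dimension of the complement in terms of exactly the Lefschetz condition we have assumed: $\cd(\PP^n\setminus Y)\le n-c$ if and only if the restriction maps $H^i(\PP^n,\QQ)\to H^i(Y,\QQ)$ are isomorphisms for $i<c$. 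Applying this with $c=\dim Y$ yields (b), and combining with (a) finishes the proof via Theorem \ref{amplecd}.

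The main obstacle is the nontrivial implication going from a Hodge/topological vanishing (isomorphism in rational cohomology) to a coherent cohomological vanishing on the complement — which does not follow formally from the de Rham spectral sequence argument used in Corollary \ref{Lefschetz}, but relies on Ogus's comparison theorem between formal and algebraic cohomology for subschemes of projective space. Once that black box is in hand, the rest of the argument is a direct assembly of Theorem \ref{amplecd} together with the basic fact that $T_{\PP^n}$ is ample.
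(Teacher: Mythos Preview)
Your proof is correct and follows essentially the same route as the paper: invoke Ogus's theorem to identify the Lefschetz condition with $\cd(\PP^n\setminus Y)=\codim Y-1$, observe that $\N_{Y|\PP^n}$ is ample as a quotient of the ample bundle $T_{\PP^n}$, and then apply Theorem~\ref{amplecd}. The only cosmetic difference is that the paper handles both directions at once via the ``if and only if'' form of Ogus's result, whereas you split off the forward direction using Corollary~\ref{Lefschetz}.
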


\begin{proof}
This follows from a result of Ogus \cite[Theorem 4.4]{Og}, which says that the condition $\cd(\PP^n\setminus Y)=\mbox{codim} Y- 1$ is equivalent to having the above isomorphisms for $0\le i<\dim Y$. When $Y$ is smooth, its normal bundle $N_{Y|\PP^n}$ is ample, since it is a quotient of $T_{\mathbb{\PP}^n}$, which is ample (for $Y$ only lci this might not be the case). Now the result follows from Theorem \ref{amplecd}.
\end{proof}



\begin{example}
Any connected curve in $\PP^n$ is ample. A smooth surface is ample in $\PP^n$ if and only if it is connected and has zero first Betti number.
\end{example}

\begin{example}
By Poincar\'e duality one can replace the maps \eqref{Lmaps} by the corresponding maps in homology, so that ample subvarieties $Y$ have the same rational homology as $\PP^n$ in degrees $0\le i<\dim Y$. The corresponding statement for integral homology is however not true. For example, an Enriques surface $S\subset \PP^5$ is ample by the above proposition, but $H_1(S,\ZZ)=\ZZ/2\ZZ$.  
\end{example}

\begin{example}
Even though two ample subschemes usually have ample intersection, their union might not be ample. For example, let $Y$ be the union of two skew lines in $\PP^3$. Then $Y$ is not ample in our sense, since $Y$ is not connected. In fact, the blow-up of $Y$ can be identified with the projectivized bundle $\pi :\PP(\O(1,0) \oplus \O(0,1))\to \PP^1 \times \PP^1$. Using this description it is easy to check directly that $E$ is not $1-$ample. Note that on the other hand that smooth conics in $\PP^3$ are ample (being complete intersections), so this example shows that ampleness is not stable under rational equivalence.
\end{example}

\begin{example}
If $Y=\PP^1\times \PP^2$ is embedded in $\PP^5$ by the Segre embedding, then the resulting variety is called the Segre cubic threefold. $Y$ is not ample, since $H^2(\PP^1 \times \PP^2,\QQ)=\QQ^2 \neq \QQ = H^2(\PP^5,\QQ)$. Interestingly, in any positive characteristic $p>0$, we have $\cd(\PP^5\setminus Y)=1$, while in characteristic zero,   $\cd(\PP^5\setminus Y)=2$ (see \cite{Har70}). 
\end{example}




\section{Ample curves in homogeneous varieties}
In the previous section we saw that any non-singular curve $Y\subset X=\PP^n$ is ample. In some sense this is not that surprising, since the transitive group action makes $Y$ move in a large family covering $X$, thus in a sense making any subvariety 'ample'. In this section we investigate whether the same remains true when $\PP^n$ is replaced by a different homogeneous variety.

We first recall some definitions. Let $G$ be the group acting transitively on $X$. Fix a point $y\in Y$ and let $G_Y$ be the subgroup of $G$ generated by $\{g \in G\, |\, g\cdot y \in Y\}$. We say that a subset $Y$ \emph{generates} $X$ if $G_Y$ generated $G$. Note that $G_Y$ is independent of the point $y\in Y$.


\begin{proposition}
Let $Y$ be a smooth curve in a projective homogeneous variety $X$. Then $Y$ is ample in $X$ if and only if $Y$ has ample normal bundle.
\end{proposition}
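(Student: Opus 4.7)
The forward direction is exactly Corollary~\ref{NYample}. For the converse, assume $Y \subset X = G/P$ is a smooth curve with ample normal bundle $N_{Y|X}$; I must show $Y$ is ample. By Theorem~\ref{amplecd}, combined with the hypothesis on $N_{Y|X}$, this reduces to the cohomological dimension bound $\cd(X \setminus Y) \leq n - 2$ where $n = \dim X$, since the reverse inequality $\cd(X \setminus Y) \geq n - 2$ holds automatically by \cite[p.~99]{Har70}.

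My plan is to apply Proposition~\ref{asymptoticideal}, which recasts ampleness of $Y$ as the vanishing $H^i(X, I_Y^m \otimes \L) = 0$ for $i \leq \dim Y = 1$, every line bundle $\L$ on $X$, and $m \gg 0$. The $H^0$ vanishing is automatic from finite-dimensionality of $H^0(X, \L)$, so the content is the vanishing of $H^1$. For this I plan to exploit homogeneity in two ways. First, the tangent bundle $T_X$ is globally generated as a quotient of $\mathfrak{g} \otimes \O_X$, hence so is $N_{Y|X}$. Second, the transitive $G$-action gives a flat family of translates $\{g \cdot Y\}_{g \in G}$ of $Y$ in $X$, each with ample normal bundle by equivariance. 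By cohomology-and-base-change and $G$-invariance, $\dim H^1(X, I_{g \cdot Y}^m \otimes \L)$ is independent of $g \in G$, so it suffices to exhibit a single translate where the cohomology vanishes.

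That vanishing will be established via an Ogus-type / Hartshorne formal-completion argument. Since $X = G/P$ is simply connected with rational cohomology concentrated in even degrees, the Lefschetz-type conditions $H^i(X,\QQ) \to H^i(Y,\QQ)$ are automatically an isomorphism in degree $0$ (both sides $\QQ$, $Y$ connected) and an injection in degree $1$ (the source being zero). Combined with the ample normal bundle hypothesis, this yields the G3 property of $Y$ in $X$ via the formal schemes methods of \cite[IV, V]{Har70}. Once G3 is known, Grothendieck's comparison theorem relates formal cohomology along $Y$ to the inverse system $\{H^1(X, I_Y^m \otimes \L)\}_m$, and the G3 vanishing propagates back to give $H^1(X, I_Y^m \otimes \L) = 0$ for $m \gg 0$, as required.

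The main obstacle is adapting Ogus's cohomological-dimension criterion (\cite[Theorem 4.4]{Og}), which is stated for $\PP^n$, to the homogeneous setting $X = G/P$. The method generalizes but one must handle the richer Hodge decomposition of $H^*(X,\QQ)$ carefully and verify that the ample normal bundle input suffices in place of the very restrictive topology of $\PP^n$. The essential use of homogeneity is twofold: it reduces the Lefschetz conditions to tautologies in the low degrees relevant to curves via simple-connectedness, and it supplies the $G$-equivariant family of deformations along which cohomology dimensions are constant, allowing the vanishing on a single translate to propagate to $Y$ itself.
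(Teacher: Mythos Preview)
Your outline has several genuine gaps and the argument, as written, does not go through.

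First, the $G$-translate step is circular. You observe correctly that $\dim H^1(X, I_{g\cdot Y}^m\otimes\L)$ is independent of $g$, but then say it ``suffices to exhibit a single translate where the cohomology vanishes.'' Since every translate gives the same dimension, finding a good translate is exactly as hard as proving the vanishing for $Y$ itself; nothing has been gained.

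Second, and more seriously, the implication ``$Y$ is G3 $\Rightarrow$ $H^1(X,I_Y^m\otimes\L)=0$ for $m\gg 0$'' is not justified. The G3 property is a statement about function fields, $K(X)\cong K(\hat X)$; it says nothing directly about the maps $H^0(X,\L)\to H^0(\hat X,\hat\L)$ or $H^1(X,\L)\to H^1(\hat X,\hat\L)$ that control the $H^1$ of the ideal powers via the exact sequences $0\to I_Y^m\otimes\L\to\L\to\L/I_Y^m\L\to 0$. What you would actually need is a Lefschetz-type condition in the sense of \cite[Ch.~IV]{Har70}, which is strictly stronger than G3, and you have not indicated how to obtain it from the normal-bundle hypothesis alone. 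Your derivation of G3 is also incomplete: the ``Lefschetz conditions'' you invoke in degrees $0$ and $1$ are topological statements about singular cohomology, and by themselves do not yield G3; moreover the assertion that a projective homogeneous variety is simply connected with cohomology in even degrees is false in the generality of the proposition, since abelian varieties are among the intended examples (see the corollary immediately following).

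The paper's proof avoids these difficulties by appealing to the Hartshorne--Speiser Second Vanishing Theorem \cite{Spe80}, which characterises $\cd(X\setminus Y)\le n-2$ by two conditions: (i) $Y$ is G3 in $X$, and (ii) $Y$ meets every divisor. Both are then verified using Hartshorne's theorem on curves with ample normal bundle in homogeneous varieties \cite{Har71b} together with results of B\u{a}descu--Schneider \cite{BS02} and Ballico \cite{Ballico}. Condition (ii), which you do not address at all, is essential and is precisely where the cited external results are needed.
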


\begin{proof}
To prove $Y$ is ample in $X$ we use the 'Second vanishing theorem' of Hartshorne-Speiser \cite{Spe80}, which states that $\cd(X-Y)\le \dim(X)-2$ is equivalent to the two conditions $i)$ $Y$ is G3 in $X$, i.e., $K(X)=K(\hat{X})$ and $ii)$ $Y$ meets every divisor of $X$. Now if $N_{Y|X}$ is ample, then by a theorem of Hartshorne \cite{Har71b} it follows that $Y$ generates $X$ and $K(\hat{X})$ is a finite  $K(X)$-module. Moreover, since $X$ is projective, homogeneous, \cite[Theorem 4.3]{BS02} and \cite[Theorem 2]{Ballico} now give that $i)$ and $ii)$ are satisfied.. 
\end{proof}

\begin{cor}
Let $Y$ be be a smooth curve in an abelian variety $X$. Then $Y$ is ample if and only if $Y$ generates $X$.
\end{cor}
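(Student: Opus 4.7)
The plan is to reduce the corollary to the preceding proposition. An abelian variety $X$ is projective and acts transitively on itself by translation, hence is in particular a projective homogeneous variety, and the proposition immediately reduces the problem to showing $N_{Y|X}$ is ample if and only if $Y$ generates $X$.

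First I would exploit the fact that the tangent bundle of an abelian variety is globally trivialized by translation: $T_X\cong T_0X\otimes_k\O_X$. Restricting the normal sequence $0\to T_Y\to T_X|_Y\to N_{Y|X}\to 0$ then exhibits $N_{Y|X}$ as a quotient of the trivial bundle $\O_Y^n$, where $n=\dim X$. On a smooth projective curve any quotient of a trivial bundle is globally generated, so every quotient line bundle has non-negative degree, with degree zero exactly when the quotient is trivial. Consequently $N_{Y|X}$ is ample if and only if it admits no trivial quotient line bundle.

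Next I would establish both directions of the equivalence with generation. If $Y$ fails to generate $X$, then after translation $Y$ lies inside a proper abelian subvariety $A\subsetneq X$; since $N_{A|X}$ is trivial, the sequence $0\to N_{Y|A}\to N_{Y|X}\to N_{A|X}|_Y\to 0$ produces a nonzero trivial quotient of $N_{Y|X}$, so $N_{Y|X}$ is not ample. Conversely, a trivial quotient line bundle of $N_{Y|X}$ pulls back along the trivialization $T_X|_Y\cong T_0X\otimes\O_Y$ to a surjection $T_0X\otimes\O_Y\twoheadrightarrow\O_Y$ whose kernel has the form $W\otimes\O_Y$ for a hyperplane $W\subset T_0X$; since $T_Y$ maps into this kernel, every tangent direction to $Y$ lies in $W$ after being translated to the origin. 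Letting $A\subset X$ denote the smallest closed subgroup containing $Y-y_0$ for a fixed $y_0\in Y$, I would then show $T_0A\subset W$, so that $A\subsetneq X$ and $Y$ does not generate $X$.

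The main obstacle is this last step: identifying $T_0A$ with the span of the translated tangent directions of $Y$. I would argue via the iterated sum map $\sigma_k\colon Y^k\to X$, $(y_1,\ldots,y_k)\mapsto y_1+\cdots+y_k-ky_0$. For $k$ sufficiently large the image of $\sigma_k$ is dense in $A$ by definition of $A$, and its differential at a generic point computes $T_0A$ (after translation) as the sum of the translated tangent spaces $T_{y_i}Y$, forcing $T_0A\subset W$ and completing the argument.
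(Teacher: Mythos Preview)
Your argument is correct and essentially self-contained, but it takes a different route from the paper. The paper's proof is asymmetric: for the implication ``$Y$ generates $\Rightarrow Y$ ample'' it quotes Hartshorne's theorem \cite{Har71b} that a curve generating an abelian variety has ample normal bundle, and then applies the preceding proposition; for the converse it bypasses the normal bundle entirely and invokes \cite[Theorem 4.3]{BS02} to produce, when $Y$ fails to generate, an irreducible divisor $D$ disjoint from $Y$, which contradicts ampleness directly via property (ii) of Corollary~\ref{COR}.

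By contrast, you reduce both directions to the single equivalence ``$N_{Y|X}$ ample $\Leftrightarrow$ $Y$ generates $X$'' and prove that equivalence from scratch, effectively reproving the relevant special case of \cite{Har71b}. The trade-off is clear: your argument is more elementary and avoids the black-box citations, at the cost of carrying out the tangent-space computation for the subgroup generated by $Y$. The paper's version is shorter but leans on two external references; yours would be appropriate if one wanted the corollary to stand independently of those sources. One small point worth making explicit in your write-up is that $Y$ is connected (so that global functions on $Y$ are constant), which you use when identifying the kernel of $T_0X\otimes\O_Y\twoheadrightarrow\O_Y$ with $W\otimes\O_Y$.
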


\begin{proof}
By \cite{Har71b}, the normal bundle of $Y$ is ample if $Y$ generates $X$. Conversely, if $Y$ does not generate $X$, then by \cite[Theorem 4.3]{BS02} there is an irreducible divisor $D\subset X$ such that $D\cap Y=\emptyset$, so $Y$ cannot be ample. 
\end{proof}

\begin{cor}
Let $Y$ be be a smooth curve in a smooth quadric hypersurface $X\subset \PP^{n+1}$ . Then $Y$ is ample if and only $Y$ is not a line.
\end{cor}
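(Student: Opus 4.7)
By the preceding proposition, since a smooth quadric $X \subset \PP^{n+1}$ is a projective homogeneous variety under $SO(n+2)$, a smooth curve $Y \subset X$ is ample if and only if $N_{Y|X}$ is an ample vector bundle. My plan is therefore to analyze when the normal bundle is ample and to show that this happens precisely when $Y$ is not a line.

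For $Y = L$ a line, I would compute $N_{L|X}$ directly from the standard normal bundle sequence
\[
0 \to N_{L|X} \to N_{L|\PP^{n+1}} \to N_{X|\PP^{n+1}}\big|_L \to 0,
\]
which becomes $0 \to N_{L|X} \to \O_{\PP^1}(1)^n \to \O_{\PP^1}(2) \to 0$. The right-hand map is given by the $n$ partial derivatives of the quadratic form defining $X$ restricted to $L$, and it is surjective because $X$ is smooth along $L$. A direct calculation on $\PP^1$ then identifies the kernel as
\[
N_{L|X} \cong \O_{\PP^1}(1)^{n-2} \oplus \O_{\PP^1}.
\]
The trivial summand shows that $N_{L|X}$ is not ample, so $L$ is not ample in $X$.

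For the converse, suppose $Y$ is not a line. Since $X$ is homogeneous, $T_X$ is globally generated, and consequently so is its quotient $N_{Y|X}$ on the smooth curve $Y$. To show that $N_{Y|X}$ is ample I would verify that $Y$ generates $X$ in the group-theoretic sense of the previous section and then apply Hartshorne's theorem \cite{Har71b} (the same ingredient used in the abelian-variety case) to conclude that $N_{Y|X}$ is ample.

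The main obstacle is the verification of the generating condition. For a line $L$ the subgroup $G_L$ is properly contained in the maximal parabolic stabilizing $L$, which is exactly what produces the trivial summand found above. For a smooth curve of degree $\ge 2$ one expects instead that the tangent spaces $T_y Y \subset T_y X$ vary sufficiently as $y$ moves on $Y$ that together with the point stabilizers they span all of $\mathfrak{so}(n+2)$; equivalently, no smooth curve of degree $\ge 2$ on $X$ lies in an orbit of a proper parabolic subgroup of $SO(n+2)$. The hard part is to check this case by case, relying on the classification of linear subspaces on the quadric (which, through a point, are the only orbits of proper parabolics meeting a given point).
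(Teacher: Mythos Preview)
The paper's proof is a single citation: Ballico \cite{Ballico} shows that for a smooth curve on a smooth quadric, $N_{Y|X}$ is ample if and only if $Y$ is not a line, and the preceding proposition finishes the job. Your proposal is not wrong, but it is an attempt to reprove Ballico's theorem rather than to use it.

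Your treatment of the line case is fine: the normal bundle sequence gives $N_{L|X}\cong \O_{\PP^1}(1)^{n-2}\oplus \O_{\PP^1}$, and the trivial summand kills ampleness. For the converse, your outline is in the right spirit---globally generated plus ``$Y$ generates $X$'' is exactly the mechanism behind Ballico's argument---but two points need attention. First, be precise about which statement from \cite{Har71b} you are invoking: in the paper the reference is used in \emph{two} directions (ample normal bundle $\Rightarrow$ generates, in the proposition; generates $\Rightarrow$ ample normal bundle, in the abelian-variety corollary), and you need the second direction for general $G/P$, not just for abelian varieties. Hartshorne does prove this for arbitrary homogeneous spaces, but you should say so explicitly rather than appeal to ``the same ingredient used in the abelian-variety case.'' Second, as you acknowledge, the actual work---showing that a non-line curve on a quadric generates---is left undone. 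Your heuristic about parabolic orbits through a point being linear spaces is the right idea, but it is not a proof; this is precisely the content of Ballico's paper. So your proposal is a correct sketch of a different, longer route whose hard step you have identified but not carried out; the paper bypasses all of it with a citation.
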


\begin{proof}
By a result of Ballico \cite[Theorem 1]{Ballico}, the normal bundle $N_{Y|X}$ is ample if and only if $Y$ is not a line.\end{proof}

\begin{cor}
Let $X$ be the Grassmannian $Gr(r,n)$  and let $Y$ be a non-singular curve in $X$. Then $Y$ is ample in $X$ if and only if $Y$ does not lie in some $Z_3$ where $Z_3$ is the Schubert variety parameterizing linear subspaces $S\subset \mathbb{C}^{n}$ contained in a fixed $V\simeq \mathbb{C}^{n-1}$ and containing a fixed point $p\in V$.
\end{cor}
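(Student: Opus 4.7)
The plan is to invoke the previous proposition to reduce the ampleness question to one about the normal bundle, and then to characterize when the normal bundle of a smooth curve in a Grassmannian is ample in terms of the avoidance of the Schubert locus $Z_3$.

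First, by the preceding proposition, $Y$ is ample in $X = Gr(r,n)$ if and only if the normal bundle $N_{Y|X}$ is an ample vector bundle. This is the key reduction: it turns a cohomological question about the blow-up into a purely local question about a rank-$r(n-r)-1$ bundle on the curve $Y$.

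Second, I would analyze the normal bundle using the tautological exact sequence on $X$. Letting $S$ and $Q$ denote the tautological sub- and quotient bundles on $Gr(r,n)$, we have $T_X \cong \sheafHom(S,Q) \cong S\v \otimes Q$, so the conormal sequence
\[
0 \longrightarrow T_Y \longrightarrow (S\v \otimes Q)\big|_Y \longrightarrow N_{Y|X} \longrightarrow 0
\]
expresses $N_{Y|X}$ as a quotient of $S\v|_Y \otimes Q|_Y$. Ampleness of $N_{Y|X}$ therefore depends on whether $Y$ is "generic" enough inside $X$ that no quotient of this bundle becomes non-positive.

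Third, for the forward implication, I would show that if $Y\subset Z_3$ for some fixed hyperplane $V\simeq \CC^{n-1}$ and point $p\in V$, then the inclusions $p \in S \subset V$ impose global splittings or quotients on $S|_Y$ and $Q|_Y$ that produce a trivial (hence non-ample) line bundle quotient of $N_{Y|X}$. Concretely, the data of $(V,p)$ corresponds to a fixed section of a suitable homogeneous bundle on $X$ whose vanishing locus is $Z_3$; restricting to $Y \subset Z_3$, this section yields a map out of $N_{Y|X}$ to a non-positive line bundle, obstructing ampleness. For the reverse implication, if $Y$ avoids every such $Z_3$, then the pair $(S|_Y, Q|_Y)$ is sufficiently non-degenerate that $N_{Y|X}$ becomes ample; rather than redo this computation from scratch, I would cite the corresponding theorem of Ballico (companion to \cite[Theorem 1]{Ballico}) or the relevant statement from Hartshorne \cite{Har71b} that characterizes ampleness of the normal bundle of a curve in a Grassmannian by the avoidance of this specific Schubert locus.

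The main obstacle is the third step: while the reduction to normal bundle ampleness via the previous proposition is essentially automatic, proving the precise equivalence with the Schubert condition requires identifying exactly which quotient of $S\v|_Y\otimes Q|_Y$ fails to be positive. I would expect to lean on the classification of homogeneous quotient bundles on $Gr(r,n)$ and their positivity, which is why the natural strategy is to reduce to an existing structural result in the literature rather than to compute the degrees of all possible quotient line bundles of $N_{Y|X}$ by hand.
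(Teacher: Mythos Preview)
Your approach is essentially the paper's: reduce to ampleness of $N_{Y|X}$ via the preceding proposition, then invoke a known characterization of when the normal bundle of a smooth curve in a Grassmannian is ample. The paper's entire proof is a one-line citation of Theorem~2.1 in Papantonopoulou's paper \cite{Papantonopoulou}, which states precisely that $N_{Y|X}$ is ample if and only if $Y$ is not contained in any $Z_3$.

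The only real issue is your choice of reference. Neither \cite[Theorem~1]{Ballico} (which is specifically about curves in quadrics) nor \cite{Har71b} (which concerns curves generating a homogeneous variety and the module-finiteness of $K(\hat X)$ over $K(X)$) contains the Schubert-locus characterization you need; that result is due to Papantonopoulou. Your second and third steps---the conormal sequence and the analysis of quotients of $S^\vee|_Y\otimes Q|_Y$---are reasonable intuition for why the $Z_3$ condition arises, but the paper does not carry any of this out and simply cites the finished statement. So your proposal is correct in structure, but you should replace the tentative citations with Papantonopoulou and drop the exploratory middle steps unless you intend to actually prove the normal-bundle criterion yourself.
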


\begin{proof}
By Theorem 2.1 in Papantonopoulou's paper \cite{Papantonopoulou}, the normal bundle $N_{Y|X}$ is ample if and only if $Y$ does not lie in a $Z_3$.
\end{proof}

\section{On a Kodaira vanishing theorem for $q$-ample line bundles}
In light of the two Lefschetz hyperplane theorems from Section \ref{complement}, it is natural to ask whether these can be derived from some version of the Kodaira vanishing theorem for $q$-ample line bundles, just as in the case of hypersurfaces. 
More precisely, one could wonder if $q$-ample line bundles $L$ satisfy $H^i(X,L\otimes \omega_X)=0$ for $i>q$. For $q=0$ this is the usual statement of the Kodaira vanishing theorem, while for $q=n-1$ this is consequence of Serre duality. In this section we will see that the above is false in general by exhibiting a threefold $X$ with a 1-ample line bundle $L$ such that $H^2(X,L+K)\neq 0$. We refer the reader to the book \cite{FH} for the following facts about flag varieties and representation theory.

\medskip

Let $G=SL_3(\CC)$ and let $B$ be the Borel subgroup of upper triangular matrices with the associated root system $A_2$. 
We let $\alpha_1,\alpha_2,\alpha_3$ denote the set of positive roots; assuming they have unit lengths, they will be $\alpha_1=(1,0),\alpha_2=(\frac12,\frac{\sqrt{3}}{2}),\alpha_3=(-\frac12,\frac{\sqrt{3}}{2})$. 
 
We consider the three-dimensional flag variety $X=G/B$. There is a
natural isomorphism of the weight lattice $\Lambda$ with the Picard
group $\Pic(X)$ given by $\lambda \mapsto L_\lambda = G\times_B
\CC_\lambda.$ Under this correspondence the canonical divisor $K_X$ is
given by $L_{-2\rho}$, where $\rho=\frac12(\alpha_1+\alpha_2+\alpha_3)$. In this setting, the Borel-Weil-Bott theorem gives a complete description of the cohomology groups of a line bundle $L=L_\lambda$: Either $\lambda+\rho$ lies on the boundary of a fundamental chamber of $W$ in $\Lambda$, in which case $H^i(X,L)=0$ for all $i\ge 0$, or $\lambda+\rho$ is in the interior of a Weyl chamber and 
$$H^i(X,L_\lambda)\cong H^0(X,L_{w(\lambda+\rho)-\rho})\qquad \mbox{if } i=\#\{i : (\lambda+\rho,\alpha_i)<0\}$$
and all the other cohomology groups vanish. Here $w$ denotes the unique element of the Weyl group such that  $w(\lambda+\rho)$ lies in the dominant Weyl chamber. This implies that the $q$-ample cones of $X$ are partitioned into Weyl chambers (see Figure \ref{weight}).

\begin{figure}[h!]
  \centering
    \includegraphics[width=0.3\textwidth]{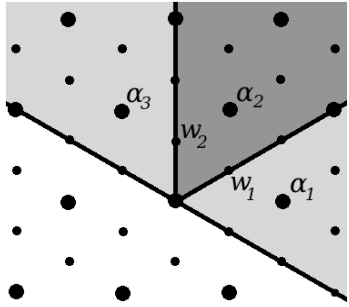}
 \caption{The ample cone (dark grey) and the 1-ample cone (light gray) of $X$}
\label{weight}
\end{figure}

Choose a basis of $\Pic(X)$ corresponding to the two generators
$v_1,v_2$ the dominant chamber; they will correspond to the two divisors generating the nef cone of $X$.
Consider the line bundle given by
$L=L_{(2,-1)}$. On one hand it is easy to check that $L$ is 1-ample by
the Borel-Weil-Bott theorem (since a reflection of it is ample). On the other hand $H^2(X,L+K)=\CC$, since by the theorem, $H^2(X,L+K)=H^2(X,L_{(0,-3)})\cong H^0(X,L_{(0,0)})=H^0(X,\O_X)=\CC$. 

We do however not know of any counterexamples to the above statement with $L$ effective.

\begin{remark}Even though it is well-known that the Kodaira vanishing theorem fails in positive characteristic, the above problem does have a positive answer in characteristic $p$ under some mild assumptions on $X$. This follows from the following vanishing theorem, due to Deligne and Illusie \cite{DI87}:

\begin{theorem}
  Let $X$ be a smooth projective variety of dimension $n$
 over a perfect field  $k$ of characteristic
$p> n$ which admits a lifting to the ring of  Witt vectors $W_2(k)$, and let 
$L$ be a line bundle on $X$. Then for any $m\ge 0$
$$
H^i(X, L^{\otimes p} \otimes \Omega_X^j)=0 \,\forall i+j=m \Longrightarrow \, H^i(X, L \otimes \Omega_X^j )=0\, \forall i+j=m
$$ 
\end{theorem}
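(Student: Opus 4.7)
The plan is to derive this as a formal consequence of the Deligne--Illusie decomposition theorem, which under the hypotheses ($p > n = \dim X$ and a lift of $X$ to $W_2(k)$) produces a quasi-isomorphism in the derived category $D(X')$:
$$
\bigoplus_{i \ge 0} \Omega_{X'/k}^{i}[-i] \xrightarrow{\sim} F_* \Omega_{X/k}^{\bullet},
$$
where $F\colon X \to X'$ denotes the relative Frobenius of $X$ over $k$. Once this decomposition is in hand, the rest of the argument is essentially book-keeping with hypercohomology and the projection formula.

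The first step is to twist by a line bundle. Let $L'$ be the line bundle on $X'$ corresponding to $L$ under the isomorphism $X' \cong X$ coming from the perfectness of $k$; a direct computation gives $F^{*}L' \cong L^{\otimes p}$. Because $L^{\otimes p}$ is a Frobenius pullback, the differentials of $\Omega_{X/k}^{\bullet}$ are $L^{\otimes p}$-linear, so $L^{\otimes p}\otimes \Omega_{X/k}^{\bullet}$ is genuinely a complex. Tensoring the Deligne--Illusie decomposition by $L'$ on $X'$ and applying the projection formula yields a quasi-isomorphism in $D(X')$:
$$
\bigoplus_{i \ge 0} \bigl(L' \otimes \Omega_{X'/k}^{i}\bigr)[-i] \xrightarrow{\sim} F_{*}\bigl(L^{\otimes p} \otimes \Omega_{X/k}^{\bullet}\bigr).
$$

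The second step is to take hypercohomology. Since $F$ is affine, the right-hand side contributes $\mathbb{H}^{m}\bigl(X,\, L^{\otimes p}\otimes \Omega_{X/k}^{\bullet}\bigr)$ in degree $m$, while the left-hand side contributes $\bigoplus_{i+j = m} H^{j}(X',\, L' \otimes \Omega_{X'/k}^{i})$. The hypothesis of the theorem says that every $E_{1}$-term on the diagonal $a+b=m$ of the Hodge-to-de Rham spectral sequence $E_{1}^{a,b} = H^{b}(X,\, L^{\otimes p}\otimes \Omega_{X}^{a}) \Rightarrow \mathbb{H}^{a+b}$ vanishes; no degeneration is needed, since triviality of every $E_{1}$ term on a diagonal is trivially inherited by $E_{\infty}$. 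Hence $\mathbb{H}^{m} = 0$, so each summand on the left vanishes, and translating back along $X' \cong X$ yields $H^{j}(X, L \otimes \Omega_{X}^{i}) = 0$ for all $i+j=m$.

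The main obstacle is of course the Deligne--Illusie decomposition itself; everything above reduces to a careful but mechanical projection-formula manipulation once that black box is available. The proof in \cite{DI87} constructs an explicit splitting of the Cartier isomorphism $C^{-1}\colon \Omega_{X'}^{i} \xrightarrow{\sim} \mathcal{H}^{i}(F_{*}\Omega_{X}^{\bullet})$ using the chosen $W_{2}(k)$-lift of $X$, and the hypothesis $p>\dim X$ is exactly what guarantees that the degree-one splitting extends multiplicatively to a splitting in $D(X')$ through all relevant degrees, with no obstructions from higher wedge powers.
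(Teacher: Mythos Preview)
Your argument is correct and is precisely the standard derivation of this vanishing statement from the Deligne--Illusie decomposition theorem. Note, however, that the paper does not actually supply a proof of this theorem: it is quoted as a result of Deligne--Illusie \cite{DI87} and used as a black box, so there is no ``paper's own proof'' to compare against. What you have written is essentially a sketch of how the cited reference proves it, and all the key steps---the twisted decomposition via the projection formula using $F^{*}L' \cong L^{\otimes p}$, the spectral-sequence observation that vanishing of an entire $E_1$-diagonal forces vanishing of the corresponding hypercohomology, and the identification $X' \cong X$ via perfectness of $k$---are handled correctly.
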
\noindent In particular, setting $m=n+q+1$ in the above theorem, we see that if $L$ is $q$-ample, then $H^i(X,L \otimes \omega_X)=0$ for all $i>q$.
\end{remark}

\section{Ampleness and the Andreotti-Grauert vanishing theorem}\label{qpositive}
Let $X$ be a smooth complex projective variety of dimension $n$. We call a line bundle $L$ on $X$ \emph{$q$-positive}  if there exists a metric $h$ on $L$ such that the curvature form $\Theta_h(L)$ on $L$ has at least $n-q$ positive eigenvalues at every point. In the 1960s
Andreotti and Grauert \cite{AG} proved that a $q$-positive line bundle is $q$-ample. It is natural to wonder whether the converse is true, i.e., 

\begin{prob}
If $L$ is a $q$-ample line bundle, is $L$ also $q$-positive?
\end{prob}

\noindent This problem was posed by Demailly, Peternell and
Schneider in \cite{DPS96}. For $q=0$, this is a well-known fact in
complex geometry. For the case $q=n-1$, a partial result was proved by Demailly in \cite{Dem10} using holomorphic Morse inequalities. In his recent paper \cite{Mat11}, Matsumura proves the converse for $q=n-1$ under some weak assumptions on $L$ (see \cite[Theorem 1.2]{Mat11} for the precise statement). In particular, his result implies that the converse is true for $X$ a smooth projective surface. He also proves that the converse holds for any $q\ge 0$ under the assumption that $L$ is semiample. In this case, $q$-ampleness also coincides with the notion earlier defined by Sommese in \cite{Som}.

In this section we will show that the answer to the above problem is in general negative for $q$ in the range $\frac{n}2-1<q<n-2$ by constructing explicit counterexamples. For this purpose, we first prove a variant of the Lefschetz hyperplane theorem for $q$-positive line bundles:

\begin{lemma}\label{q-Lefschetz}
If $L$ is a $q$-positive line bundle and let $Z=V(s)\subset X$ be a smooth zero-set of a global section $s\in H^0(X,L)$ of codimension 1. Then the maps
$$
H_i(Z,\ZZ)\to H_i(X,\ZZ) 
$$are isomorphisms for $i<n-q-1$ and surjective for $i=n-q-1$.
\end{lemma}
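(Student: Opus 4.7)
The plan is to mimic the classical Andreotti--Frankel Morse-theoretic proof of the Lefschetz hyperplane theorem, replacing strict plurisubharmonicity by $q$-convexity. Fix a Hermitian metric $h$ on $L$ whose curvature $\Theta_h(L)$ has at least $n-q$ positive eigenvalues at every point, and consider on the complement $U := X \setminus Z$ the function
$$
\varphi := -\log |s|_h^2.
$$
Since $s$ vanishes exactly along $Z$, $\varphi$ is smooth and exhausts $U$, tending to $+\infty$ along $Z$. A direct local computation (using that $s$ is holomorphic, so $\partial\bar\partial \log|f|^2 = 0$ away from the zero set of a local representative $f$) gives $i\partial\bar\partial\varphi = \Theta_h(L)$ on $U$, so the Levi form of $\varphi$ has at least $n-q$ positive eigenvalues at every point of $U$; in other words, $\varphi$ is $q$-convex in the sense of Andreotti--Grauert.

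After a small generic perturbation making $\varphi$ Morse while preserving $q$-convexity, the key analytic input is the classical bound on Morse indices: at each critical point the $n-q$ positive complex directions of the Levi form force $n-q$ positive directions of the real Hessian, so the Morse index is at most $2n - (n-q) = n+q$. Combined with the fact that $\varphi$ is a proper exhaustion of $U$, standard Morse theory (Andreotti--Frankel for $q=0$, Andreotti--Grauert in general) then shows that $U$ has the homotopy type of a CW complex of real dimension at most $n+q$. Consequently
$$
H^k(U;\ZZ) = 0 \qquad \text{for all } k > n+q.
$$

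Finally, I would translate this cohomology vanishing into the claimed Lefschetz statement via duality and the long exact sequence of the pair. Since $X$ is a compact oriented real $2n$-manifold and $Z \subset X$ is a closed smooth submanifold, Lefschetz duality provides an isomorphism $H_i(X,Z;\ZZ) \cong H^{2n-i}(U;\ZZ)$, and the vanishing above yields $H_i(X,Z;\ZZ) = 0$ for all $i \le n-q-1$. Plugging this into the long exact sequence
$$
\cdots \to H_{i+1}(X,Z) \to H_i(Z) \to H_i(X) \to H_i(X,Z) \to \cdots
$$
immediately gives that $H_i(Z;\ZZ) \to H_i(X;\ZZ)$ is an isomorphism for $i < n-q-1$ and surjective for $i = n-q-1$, as desired.

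The main obstacle is the Morse-theoretic step, namely the claim that a $q$-convex exhaustion endows $U$ with the homotopy type of a CW complex of real dimension at most $n+q$. The delicate point is the index count, which requires carefully relating the complex Levi form to the real Hessian at each critical point and performing a generic perturbation that keeps the $q$-convexity intact. This is however classical, and I would simply cite the relevant result of Andreotti--Grauert; once it is granted, both the duality step and the diagram chase in the long exact sequence are routine.
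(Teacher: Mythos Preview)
Your argument is correct. Both proofs are Morse-theoretic and rest on the same analytic input---that $q$-positivity forces $n-q$ of the complex Hessian eigenvalues to have a fixed sign, which bounds the real Morse index---but you route it differently from the paper. The paper works directly on $X$ with the function $\phi=|s|_h^2$: here $Z=\phi^{-1}(0)$ is the minimum critical manifold, and the curvature hypothesis forces every other critical point to have index $\ge n-q$, so $X$ is obtained from $Z$ by attaching cells of dimension $\ge n-q$ and the homology statement drops out immediately (this is Bott's approach in the reference cited). You instead run Morse theory on the complement $U=X\setminus Z$ with the exhaustion $-\log|s|_h^2$, obtain the dual bound that $U$ has the homotopy type of a CW complex of dimension $\le n+q$, and then invoke Lefschetz duality plus the long exact sequence of the pair. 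Your route is the Andreotti--Frankel style argument and is equally valid; the paper's approach is marginally more direct since it avoids the duality step, while yours makes contact with the standard $q$-convexity framework and separates the analytic and topological inputs more cleanly. One small remark: the CW-dimension bound for $U$ is really the Andreotti--Frankel type statement (extended to the $q$-convex case), not the Andreotti--Grauert vanishing theorem per se, so cite accordingly.
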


\begin{proof}
The proof is an elementary application of Morse theory. By assumption,
there is a metric on $L$ such that the curvature form $\Theta=\partial
\overline{\partial} \log |s|^{-2}$ has at least $n-q$ positive
eigenvalues everywhere. Consider the function $\phi(x)=|s(x)|^2$ and
note that $\phi^{-1}(0)=Z$. As in \cite[\S 4]{Bott} one sees that each
component of $Z$ is a critical manifold of $\phi$. By looking at the
Taylor-series of the exponential function, we also see that the
critical points of $\log |s|^2$ and $|s|^2$ on $\phi$ on $X\setminus
Z$ coincide and that their indexes are the same. Since $\Theta$ has
$n-q$ positive eigenvalues at every point, at each critical point of
$\phi$, this means that the real hessian $D^2(\phi)$ is negative definite on a subspace of dimension $\ge n-q$. Using the argument of \cite[Proposition 4.1]{Bott}, we see that it is possible to perturb $\phi$ to have nondegenerate critical points and also all of the above properties and so $\phi$ can be regarded as a Morse function $\phi:X\to [0,\infty)$.

Finally, by the above any critical point of $\phi$ has index at least $n-q$, which by Morse theory means that the homotopy type of $X$ is obtained from $Z$ by attaching cells of dimension at least $n-q$. This completes the proof.
\end{proof}

\begin{lemma}\label{MV}
  Let $\pi:X'\to X$ be the blow-up of a smooth subvariety $S$ of codimension $\ge 2$. Then $H_1(X',\ZZ)=H_1(X,\ZZ)$. 
\end{lemma}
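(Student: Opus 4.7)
The plan is to compare the long exact sequences of the pairs $(X, X \setminus S)$ and $(X', X' \setminus E)$, where $E$ denotes the exceptional divisor. The key geometric input is that $\pi$ restricts to a homeomorphism $U := X' \setminus E \xrightarrow{\sim} X \setminus S$, so both pairs share the same complement and the inclusion $U \hookrightarrow X$ factors through $U \hookrightarrow X' \xrightarrow{\pi} X$.

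Applying excision together with the Thom isomorphism to tubular neighborhoods of $S \subset X$ and of $E \subset X'$ (orientability of the normal bundles is automatic because everything in sight is complex), one obtains
\[
H_k(X, U) \cong H_{k-2r}(S), \qquad H_k(X', U) \cong H_{k-2}(E),
\]
where $r = \codim_{\CC}(S) \geq 2$. The codimension hypothesis kills $H_k(X, U)$ for $k \leq 2r-1$; in particular $H_1(X,U) = H_2(X,U) = 0$, and the long exact sequence of $(X, U)$ produces an isomorphism $\beta : H_1(U) \xrightarrow{\sim} H_1(X)$. Meanwhile $H_1(X', U) \cong H_{-1}(E) = 0$, so the long exact sequence of $(X', U)$ yields only a surjection $\alpha : H_1(U) \twoheadrightarrow H_1(X')$, whose kernel is a priori the image of the connecting map out of $H_0(E)$.

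The final step is a one-line diagram chase: since $\beta = \pi_* \circ \alpha$ with $\beta$ an isomorphism and $\alpha$ surjective, $\alpha$ must also be injective (hence an isomorphism), and consequently so is $\pi_* : H_1(X') \to H_1(X)$.

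I do not anticipate any real obstacle here, as the entire argument is standard homological bookkeeping. The only ingredient requiring mild care is the orientability needed to invoke the Thom isomorphism, which is supplied for free by the complex structure. The conceptual content is that the troublesome connecting map $H_0(E) \to H_1(U)$---which could in principle prevent $\alpha$ from being injective---is forced to vanish by the comparison with the pair $(X, U)$, and this is precisely where the codimension hypothesis $r \geq 2$ enters.
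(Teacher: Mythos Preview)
Your argument is correct and complete. The factorisation $\beta = \pi_* \circ \alpha$ combined with the bijectivity of $\beta$ and the surjectivity of $\alpha$ cleanly forces $\pi_*$ to be an isomorphism; the Thom isomorphism over a complex submanifold is exactly the right tool to compute the relative groups.

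The paper takes a different and shorter route: it simply quotes the well-known fact that $\pi_1$ is a birational invariant of smooth complex projective varieties (referring to Griffiths--Harris), the essential point being that removing a subvariety of real codimension $\geq 4$ does not change $\pi_1$, and then abelianises. Your approach trades that citation for a self-contained homological computation. The advantage of the paper's line is brevity and the stronger conclusion $\pi_1(X') \cong \pi_1(X)$; the advantage of yours is that it stays entirely within homology, avoids any appeal to $\pi_1$ or to general birational invariance, and makes transparent exactly where the codimension hypothesis $r \geq 2$ is consumed (namely in killing $H_2(X,U)$). Either argument is perfectly adequate here.
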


\begin{proof}It is well-known that even the fundamental group $\pi_1(X)$ is a birational invariant (see e.g. \cite{GH}); the essential point is that $\pi_1(X)=\pi_1(X-S)$, since $S$ has real codimension at least four. In particular, $H_1(X',\ZZ)=H_1(X,\ZZ)$ follows by abelianization.
\end{proof}


\subsection{Construction of the counterexamples. } We first give an example of a line bundle on a smooth projective fourfold which is $1$-ample but not $1$-positive. Let $X$ be the blow
up of $\PP^4$ with center a smooth surface $S$ with
$\pi_1(S)=\ZZ/2\ZZ$. Such surfaces were constructed in \cite{Enr}. We
will consider the line bundle $L=\O_X(E)$ where $E$ is the exceptional
divisor on $X$. Here $H^1(S,\QQ)=0$, so $S$ is an ample subvariety by
Theorem \ref{Pn-ample}, and hence the line bundle $L$ is 1-ample
on $X$. There is however no metric on $L$ so that the curvature form
has $3$ positive eigenvalues everywhere. Indeed, if there were, then
by Lemma \ref{q-Lefschetz} we would have
$H_1(X,\ZZ)=H_1(E,\ZZ)=H_1(S,\ZZ)=\mathbb{Z}/2\ZZ$. But this
contradicts Lemma \ref{MV}, since in fact $H_1(X,\ZZ)=H_1(\PP^4,\ZZ)=0$. Hence $L$ is 1-ample, but not $1-$positive.

Thus what makes the counterexample work is precisely the fact that
ample subvarieties do not necessarily satisfy the Lefschetz hyperplane theorems
with integer coefficients. For $n\ge 5$, we can construct similar counterexamples for any $\frac{n}2-1 <q < n-2$ by taking
$S$ instead to be an $s$-dimensional Godeaux-Serre variety, i.e. a quotient of a smooth complete intersection by a free action of a finite group. The variety $S$ embeds into $\mathbb{P}^{n}$ for $n\ge 2s+1$ as an ample subvariety, but the exceptional divisor of the blow-up is not $(n-s-1)$-positive by Lemma \ref{q-Lefschetz}. 

\begin{theorem}
For each $q$ in the range $\frac{n}2-1 <q < n-2$, there exists a line budle on a smooth variety which is $q$-ample, but not $q$-positive.
\end{theorem}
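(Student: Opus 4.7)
My plan is to formalize the construction outlined in the paragraphs preceding the theorem statement, namely to use Godeaux--Serre varieties as the centers of blow-ups in projective space. The conceptual engine is that $q$-positivity, via Lemma \ref{q-Lefschetz}, forces an \emph{integral} Lefschetz-type theorem on a smooth zero set of a section of $L$, whereas $q$-ampleness of the exceptional divisor only delivers a \emph{rational} Lefschetz theorem on the center. Picking a center with nontrivial $\pi_1$ exploits exactly this gap.

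First I translate the range into a codimension: fix $n$ and $q$ with $\tfrac{n}{2}-1 < q < n-2$ and set $s := n-q-1$, so that $2 \le s$ and $s < n/2$ (in particular $n \ge 2s+1$). I choose a Godeaux--Serre variety $S$ of dimension $s$, i.e.\ a smooth free quotient $S = \tilde S / G$ of a smooth complete intersection $\tilde S \subset \PP^N$ by a nontrivial finite abelian group $G$ (taking $G$ abelian ensures $G^{\mathrm{ab}} = G \neq 0$). Since $\dim S = s$ and $n\ge 2s+1$, $S$ embeds into $\PP^n$. Let $\pi \colon X \to \PP^n$ be the blow-up of $\PP^n$ along $S$, with exceptional divisor $E$, and set $L := \O_X(E)$; then $X$ is smooth and $E$ is the zero set of the tautological section of $L$.

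To verify that $L$ is $q$-ample, by the definition I must show that $S$ is an ample subvariety of $\PP^n$, since then $E$ is $(\codim S - 1) = (n-s-1) = q$-ample. By Theorem \ref{Pn-ample} it suffices to check that $H^i(\PP^n,\QQ) \to H^i(S,\QQ)$ is an isomorphism for $0 \le i < s$. Because $G$ is finite and acts freely, $H^i(S,\QQ) = H^i(\tilde S,\QQ)^G$; the classical Lefschetz hyperplane theorem for the smooth complete intersection $\tilde S \subset \PP^N$ gives $H^i(\tilde S,\QQ) \cong H^i(\PP^N,\QQ)$ for $i < s$, and the hyperplane class is $G$-invariant, so taking $G$-invariants preserves the identification with $H^i(\PP^n,\QQ)$. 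Hence $S$ is ample and $L$ is $q$-ample.

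Assume for contradiction that $L$ is $q$-positive. Applying Lemma \ref{q-Lefschetz} to the smooth divisor $E = V(s) \subset X$, the map $H_1(E,\ZZ) \to H_1(X,\ZZ)$ is an isomorphism, since $1 < n-q-1 = s$ by our choice $s \ge 2$. However, $X$ is the blow-up of the smooth projective variety $\PP^n$ along a smooth subvariety of codimension $\ge 2$, so Lemma \ref{MV} yields $H_1(X,\ZZ) = H_1(\PP^n,\ZZ) = 0$, while $E = \PP(N_{S|\PP^n}^*)$ is a projective bundle over $S$ with simply connected fibers, giving $\pi_1(E) = \pi_1(S) = G$ and thus $H_1(E,\ZZ) = G^{\mathrm{ab}} = G \neq 0$. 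This contradicts the purported isomorphism, so $L$ is not $q$-positive. The main obstacle in this plan is the rational Lefschetz verification for $S$ in Theorem \ref{Pn-ample}; everything else is a direct assembly of the earlier lemmas, together with the existence of Godeaux--Serre varieties of arbitrary dimension with prescribed (nontrivial abelian) fundamental group.
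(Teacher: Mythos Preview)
Your proposal is correct and follows essentially the same construction as the paper: blow up $\PP^n$ along a Godeaux--Serre variety $S$ of dimension $s=n-q-1$, use Theorem \ref{Pn-ample} to certify that $\O_X(E)$ is $q$-ample, and use Lemma \ref{q-Lefschetz} together with Lemma \ref{MV} and the torsion in $H_1(S,\ZZ)$ to rule out $q$-positivity. You have simply fleshed out the details the paper leaves implicit, such as the verification of the rational Lefschetz isomorphisms for $S$ via $H^i(S,\QQ)=H^i(\tilde S,\QQ)^G$ and the identification $\pi_1(E)=\pi_1(S)$ from the projective bundle structure.
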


It's worth pointing out that a projective variety $S$ with non-trivial $\pi_1(S)$ cannot be embedded in a projective space of dimension $\le 2s-1$, by the Barth-Larsen theorem \cite[Theorem 3.2.1]{Laz04}, so the above approach does not give counterexamples for $q\le\frac{n}2-1$. So for low $q$ the problem remains open.




\bibliography{ample}

\bibliographystyle{plain}






\end{document}